\newtheorem{theorem}{Theorem}[section]
\newtheorem{lemma}[theorem]{Lemma}
\theoremstyle{definition}
\newtheorem{definition}[theorem]{Definition}
\theoremstyle{remark}
\newtheorem{notation}[theorem]{Notation}
\newtheorem{hypotheses}[theorem]{Hypotheses}
\newcommand{\dom}{{\rm dom}}
\newcommand{\cH}{{\mathscr H}}
\newcommand{\cI}{{\mathscr I}}
\newcommand{\MPB}{{\mathbb P}}
\newcommand{\Gen}{{\rm Gen}}
\newcommand{\CH}{{\rm CH}}
\newcommand{\GCH}{{\rm GCH}}
\def\MPB{{\mathbb{P}}}
\def\MQB{{\mathbb{Q}}}
\def\MRB{{\mathbb{R}}}
\def\mathunderaccent#1#2 {\let\theaccent#1\skewfactor#2
\mathpalette\putaccentunder}
\def\putaccentunder#1#2{\oalign{$#1#2$\crcr\hidewidth
\vbox to.2ex{\hbox{$#1\skew\skewfactor\theaccent{}$}\vss}\hidewidth}}
\begin{document}

\title {The Abraham-Shelah $\Delta^2_2$-well-ordering of the reals}

\author[M.  Golshani]{Mohammad Golshani}

\address{Mohammad Golshani, School of Mathematics, Institute for Research in Fundamental Sciences (IPM), P.O.\ Box:
	19395--5746, Tehran, Iran.}

\email{golshani.m@gmail.com}
\urladdr{http://math.ipm.ac.ir/~golshani/}

\thanks{ The  author's research has been supported by a grant from IPM (No. 1401030417).}

\subjclass[2020]{Primary: 03E35, 03E45, }

\keywords {Trees, proper forcing, definable well-ordering}


\begin{abstract}
We give an exposition of the Abraham-Shelah's proof of the consistency of $\GCH$ with existence of a  $\Delta^2_2$-well-ordering of the reals.
\end{abstract}

\maketitle
\numberwithin{equation}{section}
\section{introduction}

In \cite{Ab-Sh}, Abraham and Shelah proved the following theorem.
\begin{theorem}
\label{th1}
Assume $\GCH$ holds. Then there exists a generic extension of the universe in which $\GCH$ hold and there exists a $\Delta^2_2$ well-ordering of the reals.
\end{theorem}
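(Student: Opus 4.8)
\medskip\noindent
\emph{The forcing.} The plan is to obtain the model by a countable support iteration $\langle \MPB_\alpha, \name{\MQB}_\beta : \alpha\le\omega_1,\ \beta<\omega_1\rangle$ over $V$ in which each iterand $\name{\MQB}_\beta$ is (forced to be) $\sigma$-closed of size $\le\aleph_1$ and is designed to ``write'' one further layer of a global code. Since a countable support iteration of $\sigma$-closed forcings is $\sigma$-closed, $\MPB_{\omega_1}$ adds no new reals and no new $\omega$-sequences, so it preserves $\omega_1$ and keeps $2^{\aleph_0}=\aleph_1$; a $\Delta$-system argument under $\CH$ (maintained throughout) shows $\MPB_{\omega_1}$ is $\aleph_2$-c.c., so it preserves all cardinals and keeps $2^{\aleph_1}=\aleph_2$. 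Thus $\GCH$ survives, settling that clause of Theorem~\ref{th1}. For the definability clause it is harmless to assume $V=L$: for the relative consistency statement this costs nothing, and it anchors the construction in a lightface way (over a general model of $\GCH$ one pays at most a real parameter).

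\medskip\noindent
\emph{The code.} Using the canonical well-ordering of the reals of $V$, at stage $\alpha$ pick the least real $r_\alpha$ not yet coded, and let $\name{\MQB}_\alpha$ act on the $\alpha$-th block $\langle T^n_\alpha : n<\omega\rangle$ of a fixed $\omega_1$-sequence of $\omega$-blocks of non-special Aronszajn trees, built at a preliminary $\sigma$-closed stage of the iteration in a mutually generic way, so that no later specialisation affects a tree outside the block then being specialised, nor adds a branch to any of them: $\name{\MQB}_\alpha$ specialises $T^n_\alpha$ exactly when $r_\alpha(n)=1$ and leaves $T^n_\alpha$ alone otherwise. The intended situation in $W:=V^{\MPB_{\omega_1}}$ is then that for all $\alpha,n$, $T^n_\alpha$ is special in $W$ iff $r_\alpha(n)=1$. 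Granting this, ``$r_\alpha(n)=1$'' is the $\Sigma_1$ assertion over $H(\omega_2)^W$ that $T^n_\alpha$ carries a specialising function, whose negation is the honest $\Pi_1$ assertion that $T^n_\alpha$ has none; and since each block is recovered from the generic by a parameter-free recursion and $r_\alpha$ from the speciality pattern of its own block, the well-ordering $<^*$ of the reals of $W$ defined by ``is coded at an earlier stage'' is read off $W$ with no parameters. I expect the preservation theorem securing the intended situation --- that along this iteration a tree $T^n_\alpha$ with $r_\alpha(n)=0$ is never inadvertently specialised or given a branch by a later iterand --- to be the main obstacle; it is exactly the mutual-genericity property the tree-building must deliver, and is not automatic from $\sigma$-closure.

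\medskip\noindent
\emph{The definability computation.} Working in $W$, one expresses $x<^* y$ thus: there is $A\subseteq\mbR$ coding a wellfounded model $\cM$ of a fixed finite fragment of $\ZFC$ which believes itself to be $L$ extended by an initial segment of the above iteration together with its generic, such that (i) $\cM$ is \emph{correct} --- its ordinals are genuinely wellfounded, it computes $\omega_1$ correctly, its ground model really is an initial segment of $L$, and every tree it regards as having no specialising function and no uncountable branch really has none in $W$ --- and (ii) inside $\cM$ the real $x$ is decoded at an earlier stage than $y$. The existence of such an $A$ is $\Sigma^2_1$; clause (i) costs one universal second-order real quantifier, namely ``for every $B\subseteq\mbR$, $B$ codes neither a branch through a tree $\cM$ regards as branchless, nor a specialising function for a tree $\cM$ regards as non-special, nor an illfoundedness witness for $\cM$'' (the absoluteness of $L$ and the correct computation of $\omega_1$ handle the rest of (i)). So ``$x<^* y$'' is $\Sigma^2_2$. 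Since $<^*$ well-orders \emph{all} the reals of $W$, ``$x<^* y$'' is equivalent to ``$x\ne y$ and not $y<^* x$'', hence is also $\Pi^2_2$; therefore $<^*$ is $\Delta^2_2$, as required.

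\medskip\noindent
\emph{Why it works.} The two halves interlock. For the $\Sigma^2_2$ lower bound, the transitive collapse of an $\aleph_1$-sized elementary submodel of $H(\theta)^W$ containing $x$, $y$ and the relevant blocks is a correct $\cM$ that decodes $<^*$ exactly as $W$ does, so every true instance of $x<^* y$ is caught. For the $\Pi^2_2$ side --- the unforgeability --- one needs every correct $\cM$ to decode $<^*$ the same way as $W$: since $L$ is lightface definable and $\cM$ is correct about $\omega_1$ and about $L$, $\cM$'s ground model is an initial segment of the true $L$; since the iteration is a lightface recursion with no choices (``the least $r_\alpha$''), $\cM$ runs the same iteration as $W$; and since the tree properties in play are $\Sigma_1$/$\Pi_1$ over $H(\omega_2)$ while $\cM$'s branchless-and-non-special claims are verified to hold in $W$, $\cM$'s decoded bits agree with $W$'s --- and here the preservation theorem of the second step is what is invoked. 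What remains is routine: checking that ``correct'' is indeed expressible with the displayed quantifiers, and that the absoluteness facts used (about $L$, about wellfoundedness, and about being a special tree) transfer between $\cM$ and $W$.
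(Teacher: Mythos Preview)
Your reduction to $V=L$ is both illegitimate and self-defeating. The theorem asks for a generic extension of \emph{the given} model of $\GCH$, so you are not free to replace the ground model by $L$; and if you were, no forcing would be needed, since $L$ already carries a lightface $\Delta^1_2$ (hence $\Delta^2_2$) well-ordering of its reals --- your entire iteration would be redundant. Over an arbitrary ground model $V\models\GCH$ your unforgeability argument collapses: a ``correct'' $\cM$ in your sense need not have $V$ as its ground model nor your generic as its generic, so it may build entirely different trees and decode a different order. The throwaway claim that ``over a general model of $\GCH$ one pays at most a real parameter'' is false --- a single real cannot pin down an arbitrary transitive model of $\GCH$ --- and this is exactly the point where the Abraham--Shelah argument does genuine work rather than bookkeeping.

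The paper's route is structurally different and avoids any appeal to definability of the ground model. A subset $P\subseteq\omega_1$ coding a well-order of the reals is written into an $S^*$-$(\bold{su},\bold{sp})$ pattern of Aronszajn trees (Lemma~\ref{k12}), and this pattern is described by a single sentence $\Psi$ of the Magidor--Malitz logic $L(Q^{MM})$. One then forces with a countable-support iteration of length $\omega_2$ of Shelah's $S_*$-specialization forcings $\MPB_T$ (Theorem~\ref{k16}) --- which are $<\omega_1$-proper and $\mathbb{D}$-complete, \emph{not} $\sigma$-closed --- so as to make the relevant family of Souslin trees \emph{primal} (Theorem~\ref{k17}). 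Primality forces $\Psi$ to have a unique model up to isomorphism (Theorem~\ref{k19}); since ``there is a model of $\Psi$ placing $r_\alpha$ before $r_\beta$'' is $\Sigma^2_2$ (Lemma~\ref{g12}), the well-order is $\Delta^2_2$. The preservation problem you flag as ``the main obstacle'' is handled by Lemma~\ref{g3} together with Lemma~\ref{f8}; there is no shortcut via ``mutual genericity'' of $\sigma$-closed iterands, and indeed a $\sigma$-closed specialization of one tree gives you no control over the Souslin-ness of the others.
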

In this short note, we give an exposition of their proof, by giving some details for the missing parts.

The paper is organized as follows. In Section \ref{s-st-special} we present some preliminaries about proper forcing notions,  trees
and the Magidor-Malitz logic. In Section \ref{g15}, we show how to  specialize an Aronszajn tree on a stationary  subset of $\omega_1$. In Section \ref{g18} we prove an encoding theorem which plays a key role in the proof, and finally in
Section \ref{g19} we complete the proof of
theorem \ref{th1}.

\section{Some preliminaries}
\label{s-st-special}
In this section we provide some definitions and results which we will use for the rest of the paper.
\begin{hypotheses}
Through this paper we always assume that:
\begin{enumerate}
\item $\text{GCH}$ holds,
\item $S_* \subseteq \omega_1$ is stationary,
\end{enumerate}
\end{hypotheses}
\subsection{Properness and adding no reals}
\label{su1}
 We assume familiarity with proper forcing
and countable support iterations. For a forcing notion $\MPB$ and conditions $p, q \in \MPB,$ we say $q$ is stronger than $p$ if $q \geq p$.
We start by defining two notions which guarantee that a countable support iteration of forcing notions satisfying them does not add reals.
The first one is the notion of $<\omega_1$-properness.
\begin{definition}
\label{y12}
\begin{enumerate}
\item $\MPB$ is $\alpha$-proper
  if whenever $\chi$ is large enough regular, $\bar N = \langle N_i:i \le \alpha \rangle$ is an
increasing and continuous chain of countable elementary submodels of $({\cH}(\chi),\in)$ with $\alpha, \MPB \in N_0$
and
$\bar N \restriction (i+1) \in N_{i+1}$,
if $p \in \MPB \cap N_0$,
then there is $q,~p \le q \in \MPB$ such that $q$ is
$(N_i,\MPB)$-generic for each $i \le \alpha$.

\item We say $\MPB$ is $< \omega_1$-proper if $\MPB$ is
$\alpha$-proper for any $\alpha < \omega_1$.

\end{enumerate}
\end{definition}
The second notion is that of $\mathbb{D}$-completeness, that we give its definition is the sequel.
Let us start by fixing some notation.
\begin{notation}
\label{genPN}
Suppose $\MPB$ is a forcing notion, $p \in \MPB$ and  $N$ is a model with $\MPB \in N$. Then
\begin{enumerate}
\item[] $\Gen(N, \MPB) =\{ \bold G \subseteq \MPB \cap N: \bold G \text{~is a~} \MPB \cap N\text{-generic filter over~}N          \}.$

\item[] $\Gen^+(N, \MPB) =\{ \bold G \in \Gen(N, \MPB): G$ has an upper bound in $\MPB\}$.

\item[] $\Gen(N, \MPB, p) =\{ \bold G \in \Gen(N, \MPB): p \in \bold G\}$.
\end{enumerate}
\end{notation}

\begin{definition}
\label{completeness system}
\begin{enumerate}
\item (\cite[Ch. V, Definitions 5.2]{Sh:f}) A \emph{completeness system} for a forcing notion $\MPB$ is a function $\mathbb{D}$ such that the following statements hold:
\begin{enumerate}
\item For a sufficiently large $\theta,$ the domain of $\mathbb{D}$ consists of pairs $(N, p),$ where
$N \prec (H(\theta), \in)$ is countable, $\MPB \in N$ and $p \in \MPB \cap N,$

\item For every $(N, p) \in \dom(\mathbb{D}),$ $\mathbb{D}(N, p)$ is a collection of subsets of $\Gen(N, \MPB, p)$.
\end{enumerate}

\item (\cite[Ch. V, Definition 5.2]{Sh:f})
Suppose $\kappa$ is a cardinal. We say  $\mathbb{D}$ is a $\kappa$-completeness system for  $\MPB$, if it is a  completeness system
for $\MPB$ and for every $(N, p) \in \dom(\mathbb{D}),$ the intersection of fewer than $1+\kappa$ elements of $\mathbb{D}(N, p)$ is nonempty.

\item (\cite[Ch. V, Definition 5.4]{Sh:f})
A completeness system $\mathbb{D}$ for  $\MPB$ is \emph{simple}
if there is a second order formula $\Psi$ such that $\mathbb{D}(N, p)=\{\mathcal{G}_X: X \subseteq N          \}$, where
\[
\mathcal{G}_X=\{\bold G \in \Gen(N, \MPB, p): (N, \in, \MPB \cap N)\models \Psi(\bold G, X)    \}.
\]

\item (\cite[Ch. V, Definition 5.3]{Sh:f})
Suppose $\mathbb{D}$ is a simple completeness system for $\MPB$. Then $\MPB$ is said to be $\mathbb{D}$-complete, if for  every
 $(N, p) \in \dom(\mathbb{D}),$ $\Gen^+(N, \MPB, p)$ contains an element of $\mathbb{D}(N, p)$.
 \end{enumerate}
\end{definition}
The next theorem  gives a sufficient condition  for a countable support iteration of
forcing notions to not add new reals.
\begin{theorem}
\label{pre22}
(\cite[Ch. VIII, Theorem 4.5]{Sh:f})
A countable support iteration of forcing notions which are $<\omega_1$-proper and $\mathbb{D}$-complete
with respect to a simple 2-completeness system does not introduce reals.
\end{theorem}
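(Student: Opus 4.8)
The plan is to prove, by induction on the length $\delta$ of the countable support iteration $\bar{\MPB} = \langle \MPB_i, \name{\MQB}_j : i \le \delta,\ j < \delta\rangle$ in which each $\name{\MQB}_j$ is forced to be $<\omega_1$-proper and $\name{\mathbb{D}}_j$-complete for a simple $2$-completeness system, the strengthened statement: $\MPB_\delta$ is $<\omega_1$-proper, and there is a simple $2$-completeness system $\mathbb{D}_\delta$ for $\MPB_\delta$ with respect to which $\MPB_\delta$ is $\mathbb{D}_\delta$-complete. The conclusion that $\MPB_\delta$ adds no new reals then follows uniformly: given a $\MPB_\delta$-name $\name{r}$ forced by $p$ to be an element of ${}^\omega\omega$, pick a countable $N \prec (\cH(\chi),\in)$ with $\MPB_\delta, p, \name{r} \in N$; every $\mathbf{G} \in \Gen(N,\MPB_\delta,p)$ meets, for each $n$, the set $\{p' \ge p : \exists k\ p' \Vdash \name{r}(n) = k\}$, which is dense above $p$ and lies in $N$, so $\mathbf{G}$ reads off a function $n \mapsto k_n$ with each $k_n \in N$; by $\mathbb{D}_\delta$-completeness there is such a $\mathbf{G}$ lying in a member of $\mathbb{D}_\delta(N,p)$ contained in $\Gen^+(N,\MPB_\delta,p)$, hence with an upper bound $q \ge p$, and then $q \Vdash \name{r} = \langle k_n : n < \omega\rangle$, a ground-model real. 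So the entire problem is the preservation of ``$<\omega_1$-proper plus $\mathbb{D}$-complete for a simple $2$-completeness system'' under countable support iteration; as will be seen, the number $2$ is used only at limits of cofinality $\omega$.

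The successor case $\delta = \gamma+1$ goes through $\MPB_{\gamma+1} \cong \MPB_\gamma * \name{\MQB}_\gamma$: a two-step iteration of $<\omega_1$-proper forcings is $<\omega_1$-proper by the standard elementary-submodel argument (if $\name{\MQB}_\gamma$ is forced to be $\alpha$-proper, then $\MPB_\gamma * \name{\MQB}_\gamma$ is $\alpha$-proper), and the completeness systems $\mathbb{D}_\gamma$ and $\name{\mathbb{D}}_\gamma$ compose to a simple $2$-completeness system $\mathbb{D}_{\gamma+1}$ for $\MPB_{\gamma+1}$, whose $2$-intersection property reduces to those of the two factors. For a limit $\delta$ of uncountable cofinality, $\MPB_\delta$ is the direct limit of the $\MPB_\gamma$, $\gamma<\delta$, and both $<\omega_1$-properness and $\mathbb{D}$-completeness pass to this direct limit by a routine argument (any condition, and any relevant dense set met with a countable model, is already captured at a bounded stage); so this case is immediate from the induction hypothesis.

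The crux is a limit $\delta$ with $\cf(\delta) = \omega$; fix $\langle \delta_n : n < \omega\rangle$ increasing and cofinal in $\delta$. Here $\MPB_\delta$ is the inverse (countable-support) limit, so a condition of $\MPB_\delta$ is a genuinely new object that must be built by hand. Given a countable $N \prec (\cH(\chi),\in)$ containing everything relevant, $p \in \MPB_\delta \cap N$, and a ``task'' such as ``decide $\name{r} \restriction n$ by stage $n$'', I would construct an increasing continuous $\in$-tower $\langle N_\zeta : \zeta \le \gamma\rangle$ of countable elementary submodels with $N_0 = N$, together with an increasing sequence $\langle q_n : n < \omega\rangle$ of conditions, $q_n \in \MPB_{\delta_n}$, such that: (i) $q_n$ is $(N_\zeta, \MPB_{\delta_n})$-generic for the appropriate initial segment of the tower --- this is exactly where $<\omega_1$-properness, i.e.\ $\alpha$-properness for the countable but a priori large ordinal $\gamma$, is invoked; (ii) $q_n$ carries out the task through stage $n$; (iii) the $q_n$ cohere well enough that the object $q$ with $q \restriction \delta_n$ ``equal to'' $q_n$ and support the union of the supports of the $q_n$ is a genuine condition of $\MPB_\delta$ extending $p$. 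The obstruction at step $n$ is that, having forced $q_n$ to be generic over the current tower, one must keep it extendible in the tail interval $[\delta_n, \delta)$; this is where $\mathbb{D}$-completeness at coordinate $\delta_n$, and crucially the $2$-intersection property, enter --- one intersects the member of the completeness system coming from the ``generic over the tower'' demand with the one coming from the ``extendible in the tail'' demand, and $2$-completeness makes this intersection nonempty, producing an honest condition with which to continue. Finally one defines $\mathbb{D}_\delta(N,p)$ so as to record precisely ``there is an upper bound $q \ge p$ in $\MPB_\delta$ that is moreover generic over a prescribed tower of models run through the coordinates $\delta_n$'', verifies that it is simple by writing its defining condition as a second-order formula over $(N, \in, \MPB_\delta \cap N)$, and reduces its $2$-intersection property to the (forced) $2$-intersection property of the $\name{\mathbb{D}}_{\delta_n}$.

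The main difficulty is exactly this cofinality-$\omega$ case: the bookkeeping that simultaneously threads genericity over the tower, the assigned task, and the coherence of the $q_n$ into a single condition of the inverse limit is delicate, and the correct formulation of $\mathbb{D}_\delta$ together with the verification of its simplicity and, above all, its $2$-intersection property is the technical heart of the argument. The constant $2$ is essential --- exactly two demands are amalgamated at each limit stage of the inductive construction, so a simple $2$-completeness system suffices whereas a bare completeness system would not --- and $<\omega_1$-properness, rather than plain properness, is genuinely needed because the tower of models run through the $\omega$ coordinates has order type an arbitrary countable ordinal. This is Shelah's \cite[Ch.~VIII, Theorem~4.5]{Sh:f}; the above is the shape of that argument, which in the present note we invoke as a black box.
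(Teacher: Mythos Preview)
The paper does not prove this theorem at all: it states it with the citation \cite[Ch.~VIII, Theorem 4.5]{Sh:f} and immediately moves on. Your final sentence already recognizes this (``in the present note we invoke as a black box''), so in that sense your proposal and the paper agree.

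Your sketch of Shelah's argument is in the right spirit but not quite accurate as an inductive scheme. You propose to prove that each $\MPB_\delta$ is itself $\mathbb{D}_\delta$-complete for a simple $2$-completeness system; this is not how the argument runs, and in general the iteration $\MPB_\delta$ need not satisfy $\mathbb{D}$-completeness for any such system. What Shelah actually carries through the induction is a more technical ``no new reals plus a strong form of $<\omega_1$-properness tailored to the iteration'' statement; the $\mathbb{D}$-completeness hypothesis on the iterands is used locally at each coordinate to find generics with upper bounds, not reconstituted globally as a completeness system for $\MPB_\delta$. The moral picture you give for the $\cf(\delta)=\omega$ step---threading genericity over a tower of models through the cofinal sequence $\langle\delta_n\rangle$, and using the $2$-intersection property to reconcile ``generic over the tower so far'' with ``extendible in the tail''---is the right intuition, but the packaging into ``$\MPB_\delta$ is $\mathbb{D}_\delta$-complete'' is not correct. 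Since the paper treats the result as a black box, none of this affects the present exposition; if you want an accurate outline, follow \cite[Ch.~V and VIII]{Sh:f} rather than the inductive hypothesis you stated.
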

We now introduce another notion which we will use to show that a countable support iteration of forcing notions satisfies the $\aleph_2$-c.c.
\begin{definition}
\label{picdef} (\cite[Ch. VIII, Definition 2.1]{Sh:f})
The forcing notion $\MPB$  satisfies  the $\kappa$-p.i.c ($\kappa$-properness isomorphism condition), if the following holds for any large enough regular cardinal $\lambda$:
Suppose $i< j<\kappa,  N_i, N_j \prec (\cH(\lambda), \in, \lhd_\lambda)$  (where $\lhd_\lambda$ is a well-ordering of $\cH(\lambda)$)
are countable such that $\kappa, \MPB \in N_i \cap N_j$, $i\in N_i, j \in N_j, N_i \cap \kappa \subseteq j, N_i \cap i = N_j \cap j, p \in N_i \cap \MPB$ and $h: N_i \cong N_j$ is such that $h \restriction N_i \cap N_j$ is identity and $h(i)=j.$
Then there exists $q \in \MPB$ such that:
\begin{itemize}
\item $q \geq p, h(p)$ and for every maximal antichain $\cI \in N_i$ of $\MPB$, we have that $\cI \cap N_i$
is predense above $q$ and similarly for $\cI \in N_j$,

\item for every $r \in N_i \cap \MPB$ and $q'\geq q,$ there is $q'' \geq q'$ such that $$r \leq q'' \iff h(r) \leq q''.$$
\end{itemize}
\end{definition}

\begin{theorem} ( \cite[Ch.VIII, Lemma 2.4]{Sh:f})
\label{ecc and pic and chain condition}
Assume CH holds.
If $\MPB$ is a countable support iteration of length at most $\omega_2$ whose iterands satisfy the $\aleph_2$-p.i.c, then $\MPB$
satisfies the $\aleph_2$-c.c.
\end{theorem}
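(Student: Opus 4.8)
The plan is to show that every family $\{q_\alpha : \alpha < \omega_2\}$ of conditions in $\MPB$ contains two compatible members; since this rules out antichains of size $\aleph_2$, it gives the $\aleph_2$-c.c. First I would reduce to a proper initial segment. As $\MPB$ is a countable support iteration (say of length $\delta^* \le \omega_2$), each support is a countable, hence bounded, subset of $\delta^*$; if $\delta^* = \omega_2$ then, by regularity of $\omega_2$, a subfamily of size $\aleph_2$ has all supports bounded by a single $\gamma < \omega_2$, so it suffices to work inside $\MPB_\gamma$ for some $\gamma < \omega_2$ (when $\delta^* < \omega_2$ take $\gamma = \delta^*$). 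For this to be useful one must know that each $\MPB_i$ with $i < \omega_2$ still satisfies the $\aleph_2$-p.i.c.; this is a preservation statement proved by induction on $i$, the successor step feeding the iterand's $\aleph_2$-p.i.c. into the definition and the limit step assembling a p.i.c.\ witness for $\MPB_i$ out of witnesses at earlier stages while respecting countable supports. That induction is one of the two places where real work is concentrated.

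With $\gamma < \omega_2$ fixed and $\MPB_\gamma$ satisfying the $\aleph_2$-p.i.c., fix a large regular $\lambda$ and a well-order $\lhd_\lambda$ of $\cH(\lambda)$, and for each $\alpha < \omega_2$ choose a countable $N_\alpha \prec (\cH(\lambda), \in, \lhd_\lambda)$ with $\MPB_\gamma, \langle q_\beta : \beta < \omega_2\rangle, \alpha \in N_\alpha$, so $q_\alpha \in N_\alpha$. The combinatorial heart is to extract a pair fitting the hypotheses of the $\aleph_2$-p.i.c., and here CH enters twice. First, a countable elementary submodel of $(\cH(\lambda),\in,\lhd_\lambda)$ has a countable transitive collapse, which together with its distinguished parameters is coded by a real, so there are only $2^{\aleph_0} = \aleph_1$ isomorphism types of $(N_\alpha; \in, \lhd_\lambda\restriction N_\alpha, \MPB_\gamma\cap N_\alpha, \langle q_\beta\rangle\cap N_\alpha, \alpha)$; as $\omega_2$ is regular and $>\aleph_1$, there is $A_0 \in [\omega_2]^{\aleph_2}$ of constant type, and for $\alpha,\beta \in A_0$ the collapses induce a canonical isomorphism $h_{\alpha\beta}\colon N_\alpha \to N_\beta$ with $h_{\alpha\beta}(\alpha) = \beta$, $h_{\alpha\beta}(\MPB_\gamma) = \MPB_\gamma$, $h_{\alpha\beta}(q_\alpha) = q_\beta$, which is moreover the identity on ordinals below $N_\alpha \cap \omega_1 = N_\beta \cap \omega_1$. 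Second, since $\aleph_1^{\aleph_0} = \aleph_1 < \aleph_2$ under CH, the $\Delta$-system lemma applies to the countable sets $N_\alpha \cap \omega_2$; thinning to a $\Delta$-system and then to a fast-growing subsequence (so that $\sup(N_\alpha\cap\omega_2) < \beta$ whenever $\alpha<\beta$ in the final $A \in [\omega_2]^{\aleph_2}$), one arranges for $\alpha<\beta$ in $A$ that $N_\alpha \cap \omega_2 \subseteq \beta$, that $N_\alpha \cap \alpha = N_\beta \cap \beta$, and that $h_{\alpha\beta}$ is the identity on $N_\alpha \cap N_\beta$. To force the last two clauses one must build the $N_\alpha$ coherently — e.g.\ as Skolem hulls over a coherent system of countable sets of ordinals — so that the overlap $N_\alpha \cap N_\beta$ is exactly the shared initial segment and the canonical isomorphism restricts to the identity on it; making this coherence work is the other main technical obstacle.

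Finally, for any $\alpha < \beta$ taken from $A$, the data $N_\alpha, N_\beta, h_{\alpha\beta}, \alpha, \beta$ together with $q_\alpha \in N_\alpha \cap \MPB_\gamma$ meet exactly the hypotheses in the definition of the $\aleph_2$-p.i.c.\ for $\MPB_\gamma$: $\MPB_\gamma, \omega_2 \in N_\alpha \cap N_\beta$, $\alpha \in N_\alpha$, $\beta \in N_\beta$, $N_\alpha \cap \omega_2 \subseteq \beta$, $N_\alpha \cap \alpha = N_\beta \cap \beta$, $h_{\alpha\beta}(\alpha)=\beta$, and $h_{\alpha\beta}\restriction(N_\alpha\cap N_\beta) = \mathrm{id}$. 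The $\aleph_2$-p.i.c.\ then produces $q \in \MPB_\gamma$ with $q \ge q_\alpha$ and $q \ge h_{\alpha\beta}(q_\alpha) = q_\beta$, so $q_\alpha$ and $q_\beta$ are compatible, which is what was needed. In summary, the routine parts are the setup with elementary submodels and the fast-growing/pigeonhole thinnings; the two genuinely delicate points are (i) the coherent $\Delta$-system extraction that makes the canonical isomorphism the identity on the overlap with the overlap being the common initial segment, and (ii) the preservation induction, especially its limit stages, that endows the initial segments $\MPB_i$ ($i<\omega_2$) with the $\aleph_2$-p.i.c.
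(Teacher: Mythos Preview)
The paper does not supply its own proof of this statement; it simply cites \cite[Ch.~VIII, Lemma~2.4]{Sh:f} and moves on. Your outline is essentially the standard argument from that reference and is structurally correct: bound the supports to reduce to some $\MPB_\gamma$ with $\gamma<\omega_2$, invoke preservation of the $\aleph_2$-p.i.c.\ under countable support iteration so that $\MPB_\gamma$ itself has the property, then use CH to pin down the isomorphism type of countably many elementary submodels, thin via a $\Delta$-system on the traces $N_\alpha\cap\omega_2$, and feed the resulting pair into Definition~\ref{picdef} to obtain a common extension of $q_\alpha$ and $h_{\alpha\beta}(q_\alpha)=q_\beta$.

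Your identification of the two substantive ingredients is accurate: the preservation of the $\aleph_2$-p.i.c.\ along the iteration is a separate lemma in \cite{Sh:f} (Ch.~VIII, Lemma~2.3), and the coherent choice of the $N_\alpha$'s so that the canonical isomorphism is the identity on the overlap and $N_\alpha\cap\alpha=N_\beta\cap\beta$ is exactly the content of Ch.~VIII, Lemma~2.2 there. One small point worth tightening: you do not actually need to first pass to a bounded stage and then quote p.i.c.\ for $\MPB_\gamma$; in Shelah's presentation the preservation lemma already gives the $\aleph_2$-p.i.c.\ for the full iteration $\MPB$ (of length $\le\omega_2$), and then the $\Delta$-system/isomorphism argument is run directly for $\MPB$. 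Either packaging works, but your version requires you to check that the reduction to $\MPB_\gamma$ does not lose conditions whose support is unbounded in $\omega_2$ --- which it does not, since countable subsets of $\omega_2$ are bounded --- whereas the book's version avoids that detour.
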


\subsection{$S$-st-special  trees.}
In order to show that the Sosulin hypothesis does not imply all Aronszajn trees are special, Shelah \cite{shelah81} introduced several types of specialization of Aronszajn trees which are weaker that usual specialization but still strong enough to imply that the trees are not Souslin.
Here we just consider a simple version of such specialization and refer to \cite{shelah81} and \cite{Sh:f}  for more details.

By an $\aleph_1$-tree we mean a tree of height $\omega_1$ all of whose levels are countable and such that every node has countably many successors.
\begin{notation}
If $T$ is an $\aleph_1$-tree, $s \in T$ and $\alpha < \omega_1$,  then
\begin{itemize}
\item $T_s=\{t \in T: s \leq_T t  \}$.

\item $T_\alpha=\{s \in T: ht_T(s)=\alpha     \}$ is the $\alpha$-th level of $T$.

\item If $\beta \leq ht_T(s)$, then $s \upharpoonright \beta$ is the unique element of $T_\beta$ such
that $s \upharpoonright \beta \leq_T s.$ In general given a sequence $\vec{s}= \langle  s_i: i<n        \rangle$
of elements of $T$ and some $\beta \leq \min\{ht_T(s_i): i<n\}$ we define
\[
\vec{s} \upharpoonright \beta = \langle s_i \upharpoonright \beta: i<n           \rangle.
\]

\item If $S \subseteq \omega_1,$ then
\[
T \upharpoonright S= \{t \in T: ht_T(t) \in S    \}.
\]
 \end{itemize}
 \end{notation}
\begin{definition}
\label{s-special trees}
Suppose  $S \subseteq \omega_1$ is stationary and
 $T$ is an $\aleph_1$-tree.
  $T$ is $S$-st-special when there exists an $S$-specializing function $c$ of $T$ which means
\begin{enumerate}
\item $c:T \upharpoonright S \to \omega_1$,
\item If $t \in T_\delta$ where $\delta \in S,$ then $c(t)< \delta$,
\item If $s \neq t$ are in $T \upharpoonright S$ and $c(s)=c(t)$, then $s$ and $t$ are $<_T$-incomparable.
\end{enumerate}
\end{definition}
The next lemma is immediate using Fodor's lemma.
\begin{lemma}
\label{s-specialimpliesnosuslin}
Suppose $S$ is a stationary subset of $\omega_1$ and $T$ is $S$-st-special. Then $T$ has no cofinal
branches, in particular $T$ is an Aronszajn tree. Furthermore $T$ is not Souslin
\end{lemma}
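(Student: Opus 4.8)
The claim asserts that if $S \subseteq \omega_1$ is stationary and $T$ is $S$-st-special via a specializing function $c$, then $T$ has no cofinal branch (hence is Aronszajn, being an $\aleph_1$-tree), and moreover $T$ is not Souslin. I would argue the two assertions in turn.

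\emph{No cofinal branch.} Suppose toward a contradiction that $b$ is a cofinal branch, i.e. $b \subseteq T$ is linearly ordered by $<_T$ and meets every level $T_\alpha$, $\alpha < \omega_1$. For each $\delta \in S$ there is a unique $t_\delta \in b \cap T_\delta$, and $c(t_\delta) < \delta$ by clause (2) of Definition~\ref{s-special trees}. Thus $\delta \mapsto c(t_\delta)$ is a regressive function on the stationary set $S$, so by Fodor's lemma there is a stationary $S' \subseteq S$ and a fixed ordinal $\gamma < \omega_1$ with $c(t_\delta) = \gamma$ for all $\delta \in S'$. Pick any two distinct $\delta_0 < \delta_1$ in $S'$; then $t_{\delta_0} <_T t_{\delta_1}$ (they lie on the same branch $b$), yet $c(t_{\delta_0}) = c(t_{\delta_1}) = \gamma$, contradicting clause (3), which demands that nodes with the same $c$-value be $<_T$-incomparable. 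Hence no cofinal branch exists, and since $T$ is an $\aleph_1$-tree with no branch of length $\omega_1$, it is an Aronszajn tree.

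\emph{Not Souslin.} It suffices to exhibit an uncountable antichain in $T$. For each $\alpha < \omega_1$ fix some $s_\alpha \in T_\alpha$ (possible since every level is nonempty—indeed countable and nonempty for a tree of height $\omega_1$). As above, for $\delta \in S$ we have $c(s_\delta) < \delta$, so $\delta \mapsto c(s_\delta)$ is regressive on $S$; Fodor gives a stationary $S'' \subseteq S$ on which $c$ is constant, say with value $\gamma$. The set $A = \{ s_\delta : \delta \in S'' \}$ has size $\aleph_1$ (the $s_\delta$ lie on distinct levels, hence are pairwise distinct), and by clause (3) any two distinct elements of $A$ are $<_T$-incomparable, so $A$ is an uncountable antichain. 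Therefore $T$ is not Souslin.

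\emph{Main obstacle.} There is essentially no obstacle here: the argument is a direct double application of Fodor's lemma, and the only things to be careful about are bookkeeping points—that $t_\delta$ and $s_\alpha$ are well-defined, that the chosen nodes are pairwise distinct so the antichain is genuinely uncountable, and that the definition of $\aleph_1$-tree indeed forces all levels to be nonempty. None of these require more than a sentence.
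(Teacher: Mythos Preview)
Your proof is correct and follows essentially the same approach as the paper: both parts are direct applications of Fodor's lemma to the regressive map $\delta \mapsto c(t_\delta)$ on $S$, first along a putative cofinal branch and then along an arbitrary choice of one node per level. The paper's argument is identical in structure and detail.
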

\begin{proof}
Let $c: T \upharpoonright S \to \omega_1$ witness that $T$ is $S$-st-special.
First let us show that $T$ has no cofinal branches. Assume on the contrary that $b$ is a cofinal branch of $T$ and for each
$\alpha < \omega_1$ let $b(\alpha)$ be the node in $b \cap T_\alpha$, which is unique. We define $f: S \to \omega_1$
by $f(\alpha)=c(b_\alpha)$. Then $f$ is progressive, so by Fodor's lemma it is constant on a stationary subset $S'$ of $S$. But if
$\alpha < \beta$ are in $S'$, we have $f(\alpha)=c(b_\alpha) \neq c(b_\beta)=f(\beta),$ a contradiction.

To show that $T$ is not Souslin, we argue in a similar way. For each $\alpha \in S$ pick a node $b(\alpha) \in T_\alpha$. By the
above argument, there exists a  stationary subset $S'$ of $S$ such that for all $\alpha < \beta$ in $S'$, $c(b_\alpha) = c(b_\beta)$.
It follows that $\{b(\alpha): \alpha \in S'   \}$ is an antichain of $T$ of size $\aleph_1$.
\end{proof}
We now define product and disjoint union of trees.
\begin{definition}
\label{f1}
Suppose $n<\omega$ and $T_0, \cdots, T_{n-1}$ are $\aleph_1$-trees.
\begin{enumerate}
\item $\bigotimes_{i<n}T_i=\{\bar t \in \prod_{i<n}T_i: \exists \alpha<\omega_1 \forall i<n, ht_{T_i}(t_i) =\alpha              \},$

\item Suppose $T_i$'s are pairwise disjoint. Then $\bigoplus_{i<n}T_i=\bigcup_{i<n}T_i$.
\end{enumerate}
\end{definition}
We can define $\bigoplus_{i<n}T_i$ in general by first making the $T_i$'s disjoint and then taking their union.
\begin{definition}
\label{f2}
Suppose $T$ is an $\aleph_1$ tree.
 A derived tree of $T$ is a tree of the form $T_{\vec{s}}=\bigotimes_{i<n}T_{s_i}$ where $\vec{s}= \langle s_i: i<n \rangle$ and for some $\alpha < \omega$ and each
$i<n, $ $s_i \in T_\alpha.$ 
\end{definition}
It is easily seen that a derived tree $T_{\vec{s}}$ of $T$ is Aronszajn iff for some $i<n, T_{s_i}$ is Aronszajn.
The next lemma gives a preservation result about Souslin trees.
\begin{lemma} (\cite[Theorem 3.1]{Ab-Sh}, \cite[Lemma 1.2]{miyamoto})
\label{f8}
Let $U$ be a Souslin tree. Then the property of a forcing poset being proper and forcing that $U$ is Souslin is preserved by any countable support forcing iteration.
\end{lemma}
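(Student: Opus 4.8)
The plan is to prove the preservation statement by the standard technique for iteration theorems about Souslin trees: set up a property $P$ of forcing notions that says "proper and forces $U$ Souslin," and show it is preserved at successors, at countable support limits of countable cofinality, and at limits of cofinality $\omega_1$; the latter two are the substance. I will follow Miyamoto's formulation, which is cleaner than the original Abraham--Shelah argument: the key is to work with a strengthening of properness that carries along a condition in the Souslin tree.

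First I would recall the relevant reformulation of "$U$ is Souslin after forcing with $\MPB$." Since $U$ is an $\aleph_1$-tree, $U$ is Souslin in $V^{\MPB}$ iff for every name $\name{A}$ for a maximal antichain of $U$ and every $p$, some countable level of $U$ already decides that $\name{A}$ is contained below that level; equivalently, one uses the elementary-submodel characterization: for club-many countable $N \prec (\cH(\chi),\in)$ with $U,\MPB,p \in N$, setting $\delta = N \cap \omega_1$, and for \emph{some} (equivalently every) node $t \in U_\delta$ that is $N$-generic for $U$, there is a condition $q \geq p$ that is $(N,\MPB)$-generic and forces $t$ to be below no node of the generic maximal antichains named in $N$. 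This is the ``$(U,N)$-preserving genericity'' condition. So I would define: $\MPB$ has property $P$ if $\MPB$ is proper and for every large $\chi$, club-many such $N$, every $p \in \MPB \cap N$, and every $N$-generic $t \in U_{N \cap \omega_1}$, there is an $(N,\MPB)$-generic $q \geq p$ which in addition forces that $t$ stays off every maximal antichain of $U$ lying in $N[\,\name{G}\,]$ — phrased concretely via pre-dense sets. One checks by Lemma \ref{s-specialimpliesnosuslin}-style arguments (Fodor) that $P$ implies $U$ stays Souslin.

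Next, the successor step: given $\MPB_\alpha$ with $P$ and a $\MPB_\alpha$-name $\name{\MQB}_\alpha$ for a poset with $P$ in $V^{\MPB_\alpha}$, I would show $\MPB_{\alpha+1} = \MPB_\alpha \ast \name{\MQB}_\alpha$ has $P$ by a two-step amalgamation: first use $P$ for $\MPB_\alpha$ in $N$ to get $q_0$ handling the $\MPB_\alpha$-part while keeping $t$ generic, observing that in $N[\name{G}_\alpha]$ the node $t$ is still $N[\name{G}_\alpha]$-generic for $U$ (since $\MPB_\alpha$ added no branch through $U_\delta$ and $U$ is unchanged), then apply $P$ for $\name{\MQB}_\alpha$ inside $V^{\MPB_\alpha}$, in the model $N[\name{G}_\alpha]$, to extend to $q_1$. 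The limit step of cofinality $\omega$ is handled by the usual fusion for countable support: build an increasing $\omega$-chain of conditions along an increasing chain of submodels $N_n$ with union $N_\omega$, at stage $n$ using $P$ at coordinate between $\sup$ of previous support and the next, always preserving the genericity of the \emph{same} $t \in U_\delta$ where $\delta = N_\omega \cap \omega_1$; this works because $t$ being $N_n$-generic for all $n$ gives $t$ being $N_\omega$-generic, and the fusion condition is a lower bound by properness/countable support. The cofinality-$\omega_1$ limit follows from the successor and $\omega$-limit cases together with the fact that every maximal antichain name at such a limit is essentially captured at an earlier stage (by a reflection/chain-condition argument using that $\MPB_\alpha$ for $\alpha$ of cofinality $\omega_1$ is the direct limit).

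The main obstacle is the fusion at limits of cofinality $\omega$: one must ensure that the \emph{single} node $t \in U_{\delta}$, $\delta = N_\omega\cap\omega_1$, chosen once and for all at the outset, remains $N_n$-generic (hence off the antichains named in each $N_n$) at every finite stage, while simultaneously the conditions $q_n$ chosen to witness this are forced to cohere into a lower bound $q_\omega$. This requires interleaving the tree-genericity bookkeeping with the properness fusion, and checking that passing to the intermediate extensions $N_n[\name{G}_{\alpha_n}]$ does not destroy either the elementarity needed or the $U$-genericity of $t$ — the latter because countable-support proper iterations of $P$-posets add no new nodes to $U$ below level $\delta$ and no branch, so $B_U(t) = \{s : s <_U t\}$ remains generic for $(U, N_n[\name{G}_{\alpha_n}])$. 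I will cite Theorem \ref{pre22} and the properness machinery of \cite{Sh:f} for the standard parts of the fusion and only spell out the extra clause tracking $t$.
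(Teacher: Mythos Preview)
The paper does not supply a proof of this lemma; it simply states it and cites \cite{Ab-Sh} and \cite{miyamoto}. Your outline follows Miyamoto's formulation, which is the intended route, and the overall architecture---reformulate ``proper and preserves $U$ Souslin'' as an $(N,\MPB)$-genericity condition that additionally tracks a fixed node $t\in U_{N\cap\omega_1}$, then induct on the iteration length---is correct.

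However, your description of the limit step is muddled in a way that matters. You invoke ``an increasing chain of submodels $N_n$ with union $N_\omega$,'' but the fusion argument for this preservation theorem uses a \emph{single} countable $N\prec(\cH(\chi),\in)$ with $\alpha,\MPB_\alpha\in N$, together with an increasing $\omega$-sequence of ordinals $\alpha_n\in N\cap\alpha$ cofinal in $\alpha$. One then builds conditions $q_n\in\MPB_{\alpha_n}$ with $q_{n+1}\restriction\alpha_n\geq q_n$, each $q_n$ being $(N,\MPB_{\alpha_n})$-generic and forcing that $t$ remains generic for $U$ over $N[\name G_{\alpha_n}]$, while also meeting the $n$-th dense subset of $\MPB_\alpha$ lying in $N$. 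Towers of elementary submodels are the device for $\alpha$-properness (Definition~\ref{y12}), not for this iteration theorem; conflating the two leaves a real gap, since your clause ``$t$ being $N_n$-generic for all $n$ gives $t$ being $N_\omega$-generic'' is not the mechanism that makes the limit step go through. There is also no need to separate cofinality $\omega$ from cofinality $\omega_1$: once $\alpha\in N$, the set $N\cap\alpha$ is cofinal in $\alpha$ by elementarity, and one extracts a cofinal $\omega$-sequence regardless. Finally, Theorem~\ref{pre22} (no new reals via $\mathbb{D}$-completeness) is irrelevant here and should not be cited.
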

The following lemma gives a characterization of when a Souslin tree kills an Aronszajn tree.
\begin{lemma} (\cite{lind})
\label{f9}
Let $U$ be a normal Souslin tree and $T$ a normal Aronszajn tree. Then $\Vdash_{U}$`` $T$ has a cofinal branch'' iff there exists a club $C \subseteq \omega_1$ and a strictly increasing and height preserving function $f: U \upharpoonright C \to T \upharpoonright C.$
\end{lemma}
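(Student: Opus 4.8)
The plan is to prove the two implications separately, treating $U$ as a forcing notion whose conditions are its nodes, ordered by $u' \ge u \iff u \le_U u'$, so that a $U$-generic filter over $V$ is (the downward closure of) a cofinal branch of $U$ meeting each level $U_\alpha$ in a single node, which I shall write $G(\alpha)$.

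For the easy direction $(\Leftarrow)$ I would start from a club $C$ and a height-preserving, strictly increasing $f : U\upharpoonright C \to T\upharpoonright C$, and simply push the generic branch forward: if $G$ is $U$-generic then $\langle f(G(\alpha)) : \alpha \in C\rangle$ is a $<_T$-increasing sequence with $f(G(\alpha)) \in T_\alpha$, so since $C$ is unbounded its downward closure is a cofinal branch of $T$ in $V[G]$; as $G$ was arbitrary, $\Vdash_U$ ``$T$ has a cofinal branch''. This uses nothing about $U$ beyond its being an $\aleph_1$-tree.

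The substance is $(\Rightarrow)$. Fix a $U$-name $\dot b$ with $\Vdash_U$ ``$\dot b$ is a cofinal branch of $T$''; since a cofinal branch meets every level, for each $\beta < \omega_1$ there is a name $\dot b(\beta)$ for the unique element of $\dot b \cap T_\beta$. For each $\beta$ the set of conditions deciding $\dot b(\beta)$ is open dense, so its set $A_\beta$ of $\le_U$-minimal elements is a maximal antichain of $U$, and this is countable because $U$ is Souslin; hence $A_\beta \subseteq U\upharpoonright\gamma(\beta)$ for some $\gamma(\beta) < \omega_1$. I would then let $C$ be the club of limit ordinals $\alpha$ closed under $\gamma$. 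For $\alpha \in C$ and $u \in U_\alpha$ and each $\beta < \alpha$, the node $u$ lies strictly $<_U$-above some element of the maximal antichain $A_\beta$ (which lives below level $\alpha = ht_U(u)$), so $u$ decides $\dot b(\beta)$, say with value $t^u_\beta \in T_\beta$. Then $b_u := \{t^u_\beta : \beta < \alpha\}$ is a $<_T$-chain of order type $\alpha$ (it is forced to lie inside the chain $\dot b$, and its members are in $V$), and $u$ forces $\dot b(\alpha)$ to be an upper bound of $b_u$ in $T_\alpha$; since ``$b_u$ has an upper bound in $T_\alpha$'' is a statement about objects of $V$ it already holds in $V$, and normality of $T$ (distinct nodes at a limit level have distinct predecessor-sets) makes that upper bound unique. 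I define $f(u)$ to be this unique upper bound, getting a height-preserving $f : U\upharpoonright C \to T\upharpoonright C$. To check $f$ is strictly increasing, observe that if $u <_U u'$ with both heights in $C$ then $u'$ inherits every decision of $u$, so $t^{u'}_\beta = t^u_\beta$ for all $\beta < ht_U(u)$; hence $f(u')\upharpoonright\beta = f(u)\upharpoonright\beta$ for all such $\beta$, and normality of $T$ forces $f(u) = f(u')\upharpoonright ht_U(u) <_T f(u')$.

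I expect the only real obstacle to be the well-definedness of $f(u)$ in $(\Rightarrow)$: one must see that the chain $b_u$ of values decided by $u$ actually reaches level $ht_U(u)$ in $T$, and does so at a unique node. The first point is where cofinality of $\dot b$ is used ($\dot b(\alpha)$ names an upper bound, and absoluteness pushes its existence down to $V$), and the second point — which is also exactly what upgrades $f$ from monotone to an order-embedding — is where normality of $T$ is used. Souslinity of $U$ enters only to guarantee that the antichains $A_\beta$ are countable, hence bounded below $\omega_1$.
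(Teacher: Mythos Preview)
The paper does not supply its own proof of this lemma; it merely states the result and cites Lindstr\"om \cite{lind}. So there is nothing in the paper to compare your argument against directly.

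That said, your proof is correct and is essentially the standard argument. A couple of minor remarks: in the $(\Rightarrow)$ direction you tacitly invoke the maximal principle (fullness) to pass from $\Vdash_U\text{``}T\text{ has a cofinal branch''}$ to a single name $\dot b$ forced by the root to be a cofinal branch---this is fine since $U$ is normal and hence rooted, but it is worth making explicit. Your absoluteness step is also sound: once $u$ forces $\dot b(\alpha)$ to bound $b_u$ in $T_\alpha$, any actual value of $\dot b(\alpha)$ in a generic extension is a node of $T_\alpha\in V$, so the existence of an upper bound for $b_u$ in $T_\alpha$ is already true in $V$. Your identification of where Souslinness of $U$ and normality of $T$ enter (countability of the antichains $A_\beta$, and uniqueness of limits at limit levels, respectively) is accurate.
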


\subsection{The Magidor-Malitz logic}
In \cite{magidor}, Magidor and Malitz introduced a new logic, denoted $L(Q^{MM})$, and studied some of its properties. It is obtained by adjoining to the first order logic the quantifiers $Qxy\phi(x, y)$ which is true in a structure if and only if there exists an uncountable subset
of that structure's universe such that for any two distinct $x$ and $y$ in the set, $\phi(x, y)$ holds. For the purpose of this paper, we will need the following.
\begin{lemma} (\cite{Ab-Sh})
\label{g12} For any formula $\Psi$ in the Magidor-Malitz logic $L(Q^{MM})$, the statement:
\begin{center}
``there is a model $\bold K$ of $\Psi$''
\end{center}
is equivalent to a $\Sigma^2_2$ statement.
\end{lemma}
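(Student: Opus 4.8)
The plan is to reduce ``$\Psi$ has a model'' to ``there is a set of reals coding a model of $\Psi$ of size $\le\aleph_1$'', and then to count quantifiers. We may assume $\Psi$ is a sentence (existentially closing any free variables, which is permitted in $L(Q^{MM})$) in a finite relational language. First I would prove a downward Löwenheim--Skolem statement: if $M\models\Psi$ then $\Psi$ has an $L(Q^{MM})$-elementary submodel of size $\le\aleph_1$. For this, take $N^*\prec(\cH(\chi),\in)$ with $|N^*|=\aleph_1$, $\omega_1\subseteq N^*$ and $M\in N^*$, and set $N=M\cap N^*$. One checks $N\preceq_{L(Q^{MM})}M$ by induction on formulas; the only nonroutine clause is $Qxy\,\phi(x,y,\bar a)$, where from $M\models Qxy\,\phi(\cdot,\cdot,\bar a)$ elementarity of $N^*$ yields an $\aleph_1$-sized $\phi$-homogeneous $W\in N^*$, and since a bijection $\omega_1\to W$ lies in $N^*$ and $\omega_1\subseteq N^*$ we get $W\subseteq N^*$, hence $W\subseteq N$, so $N\models Qxy\,\phi(\cdot,\cdot,\bar a)$; the converse direction and the $\exists$ clause are immediate. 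Since $\aleph_1\le 2^{\aleph_0}$, we may then fix an injection of the universe into $\mathbb{R}$ and assume the model has universe a set $U\subseteq\mathbb{R}$.

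Next I would fix a coding scheme: a set $A\subseteq\mathbb{R}$ will encode a structure $M_A$, namely the set $U$, the interpretations of the finitely many symbols, a collection of ``designated'' pairs $(\phi,\bar a)$ where $Qxy\,\phi$ is a subformula of $\Psi$ and $\bar a$ is a tuple from $U$ enumerating its remaining free variables, and for each designated pair a set $W_{\phi,\bar a}\subseteq U$. Define the \emph{guided satisfaction} relation $\models^*_A$ on $M_A$ by declaring $Qxy\,\phi(\bar a)$ to hold iff $(\phi,\bar a)$ is designated, and otherwise following the usual Tarski recursion over $U$; for fixed $\Psi$, ``$M_A\models^*_A\Psi$'' is projective in $A$. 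Call $A$ \emph{good} if (i) $A$ codes a genuine structure of the right similarity type; (ii) $M_A\models^*_A\Psi$; (iii) each designated $W_{\phi,\bar a}$ is an uncountable subset of $U$ that is $\phi$-homogeneous with respect to $\models^*_A$, i.e.\ $M_A\models^*_A\phi(a,b,\bar a)$ for all distinct $a,b\in W_{\phi,\bar a}$; and (iv) for every $B\subseteq\mathbb{R}$, every subformula $Qxy\,\phi$ of $\Psi$ and every tuple $\bar a$ from $U$ that is \emph{not} designated, $B$ fails to be an uncountable, $\phi$-homogeneous-with-respect-to-$\models^*_A$ subset of $U$. By a simultaneous induction on subformulas, clauses (iii) and (iv) force $\models^*_A$ to agree with true $L(Q^{MM})$-satisfaction on $M_A$, so a good $A$ yields $M_A\models\Psi$; conversely, given a model of $\Psi$ of size $\le\aleph_1$ with universe in $\mathbb{R}$, designating exactly the truly true $Q$-instances and choosing a homogeneous witness for each produces a good $A$. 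Hence ``$\Psi$ has a model'' is equivalent to ``some $A\subseteq\mathbb{R}$ is good''.

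Finally I would count quantifiers. Clauses (i)--(iii) are projective in $A$: in particular ``$W_{\phi,\bar a}$ is uncountable'' is $\Pi^1_2$ in $A$ (negate ``there is a real enumerating it''), and the rest involves only number and real quantifiers over atomic-in-$A$ relations. Clause (iv) has the shape $\forall B\,\theta(A,B)$ with $\theta$ projective. Absorbing the $B$-free conjuncts of ``good'' into the matrix, ``$A$ is good'' becomes $\forall B\,\theta'(A,B)$ with $\theta'$ projective, so ``some $A$ is good'' is $\exists A\,\forall B\,\theta'(A,B)$, which is $\Sigma^2_2$. I expect the real work to be purely bookkeeping: setting up the encoding so that the guided-satisfaction recursion is genuinely projective in the code, and checking that imposing (iii) and (iv) actually pins $\models^*_A$ to true satisfaction. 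This is the standard route to placing ``there is a structure of size $\aleph_1$ with a prescribed second-order property'' at the level $\Sigma^2_2$, following Abraham--Shelah, and I anticipate no essential obstacle beyond care with the coding.
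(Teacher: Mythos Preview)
The paper does not give a proof of this lemma; it simply records the statement with a citation to Abraham--Shelah and uses it as a black box in Section~\ref{g19}. Your argument is correct and is essentially the standard one underlying that citation: the downward L\"owenheim--Skolem step to size $\le\aleph_1$ via an $\aleph_1$-sized $N^*\prec\cH(\chi)$ with $\omega_1\subseteq N^*$ is exactly how one handles the $Q^{MM}$ quantifier, and your coding-plus-guided-satisfaction device, with clause~(iv) supplying the single universal set-of-reals quantifier needed to certify the \emph{negative} $Q$-instances, gives precisely the $\exists A\,\forall B\,(\text{projective})$ form and hence the $\Sigma^2_2$ bound.
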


\section{Specializing Aronszajn trees}
\label{g15}
In this section we prove the following main theorem which plays a key role in this paper.

\begin{theorem}
\label{k16}
Assume $S_* \subseteq \omega_1$ is stationary and $T$ is an Aronszajn tree. There exists a forcing notion
$\MPB_T$ such that:
\begin{enumerate}
\item $\MPB_T$ is $< \omega_1$-proper and $\mathbb{D}$-complete for an $\aleph_1$-completeness system
$\mathbb{D}$.

\item $\MPB_T$ satisfies the $\aleph_2$-p.i.c.,

\item Forcing with $\MPB_T$ adds no new reals,


\item $T$ becomes $S_*$-st-special in $V^{\MPB_T}$.
\end{enumerate}
\end{theorem}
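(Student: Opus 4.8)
The plan is to define $\MPB_T$ as the natural forcing to add an $S_*$-specializing function by finite approximations — but with a careful twist: conditions will be finite partial functions $c$ from $T\rest S_*$ to $\omega_1$ satisfying the incomparability requirement (3) of Definition~\ref{s-special trees}, together with a \emph{promise}, i.e.\ a countable set of pairs $(t,\beta)$ that the condition commits never to receive value $\beta$ in any extension. Concretely a condition is a pair $(c,\Gamma)$ where $c$ is a finite partial $S_*$-specializing function and $\Gamma$ is a countable set of "forbidden" pairs consistent with $c$; extension means $c'\supseteq c$, $\Gamma'\supseteq\Gamma$, and $c'$ respects $\Gamma$. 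The promises are what will make the forcing not add reals: without them, the forcing is just finite conditions and adds a Cohen real; the standard device (as in Shelah's treatment of specializing with finite conditions, \cite[Ch.~V, Ch.~IX]{Sh:f}) is to attach countable side conditions encoding which values are henceforth banned, so that the generic function is determined by countably much information below each level.

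The steps, in order. First, verify that $\MPB_T$ is a forcing notion and that for each $t\in T\rest S_*$ and each $\delta$ the set of conditions deciding $c(t)$ is dense — here one uses that $T$ is Aronszajn, so $T_\delta$ is countable and for $t\in T_\delta$ there are only countably many nodes comparable with $t$, hence cofinally many ordinals below $\delta$ are available as values not yet forbidden; this gives clause~(4), that $T$ becomes $S_*$-st-special in the extension. Second, prove $<\omega_1$-properness: given a tower $\bar N=\langle N_i:i\le\alpha\rangle$ and $p\in N_0$, build an increasing sequence of conditions $p_i\in N_i$ meeting all dense sets of $N_i$, and at the end form the union of the $c$'s together with the union of the $\Gamma$'s \emph{plus} enough new forbidden pairs to guarantee genericity over each $N_i$ — the point is that the "limit" condition must forbid, for each $t$ with $ht_T(t)=N_i\cap\omega_1$, all values that were not realized along the way, and one checks this remains consistent. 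Third, exhibit a simple $\aleph_1$-completeness system $\mathbb{D}$: the relevant $\mathbb{D}(N,p)$ is read off from a second-order formula $\Psi(\mathbf{G},X)$ asserting that $\mathbf{G}$ is a generic filter whose union, amalgamated with the parameter $X$ coding a choice of cofinal-in-$\delta$ set of values, has an upper bound; that finitely (indeed $\aleph_1$-many) such $X$'s can be simultaneously accommodated is exactly the consistency-of-promises computation, using GCH to keep things of size $\le\aleph_1$. Fourth, combine $<\omega_1$-properness and $\mathbb{D}$-completeness via Theorem~\ref{pre22} to get clause~(3), no new reals. Fifth, check the $\aleph_2$-p.i.c.: given $N_i\cong N_j$ via $h$ with $h(p)$ defined, the condition $q$ amalgamating $p$ and $h(p)$ exists because the value-sets involved are disjoint below the relevant ordinals (as $N_i\cap i=N_j\cap j$ and $N_i\cap\kappa\subseteq j$ force the "new" parts of $p$ and $h(p)$ to live on incomparable nodes / disjoint value ranges), so the union of the two $c$'s is still a partial specializing function and the union of the two $\Gamma$'s is still consistent.

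The main obstacle, and the heart of the argument, is the $\mathbb{D}$-completeness verification together with the limit step of $<\omega_1$-properness: one must design the countable promise component so that (a) at a countable elementary submodel $N$ with $\delta=N\cap\omega_1\in S_*$, a master condition exists that is $(N,\MPB_T)$-generic, which forces the condition to ban \emph{all but countably many, in fact a carefully chosen cofinal-in-$\delta$ set of} values at each node of level $\delta$, yet (b) this wholesale banning never renders a later requirement unsatisfiable, because at any node $t$ of height $\delta$ there are still unboundedly-many-below-$\delta$ legal values. Getting (a) and (b) to coexist is a delicate bookkeeping argument; the use of $S_*$ stationary (via Fodor, as in Lemma~\ref{s-specialimpliesnosuslin}) and of GCH (to bound the number of promises and run the $\aleph_1$-completeness count) are both essential here, and this is where the Abraham--Shelah construction earns its subtlety. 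I expect the remaining clauses to follow by now-standard countable-support-iteration bookkeeping once this core lemma is in hand.
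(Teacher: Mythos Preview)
Your proposed forcing is not the one the paper uses, and the discrepancy is exactly at the point you flag as ``the heart of the argument.'' In the paper a condition is a triple $(f_p,S_p,\Psi_p)$: here $S_p$ is a countable subset of $S_*$ with $cl(S_p)\cap S_*\subseteq S_p$, ordered by end-extension; $f_p$ is a \emph{countable} partial specializing function (so the working part is not finite); and each element of $\Psi_p$ is a promise in Shelah's sense, namely a subtree $\Gamma\subseteq T^{n}\rest C$ for some club $C$ such that above every node of $\Gamma$ and at every later level of $C$ there is an \emph{infinite pairwise-disjoint} family of extensions still in $\Gamma$. Fulfilling $\Gamma$ requires that this infinite branching persists after one removes any finite set $F$ of forbidden values (the predicate $\heartsuit(\alpha,\vec s,f,F)$). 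The paper then cites \cite{chaz94} for the verification of (1)--(4).

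Your promises are only a countable list of forbidden pairs $(t,\beta)$, and your first coordinate $c$ is finite. Two things break. First, with finite $c$ the union of an $(N,\MPB_T)$-generic filter has infinite first coordinate and is not a condition in your poset; the paper's countable $f_p$ together with end-extension of $S_p$ is precisely what makes the directed union of a generic over $N$ literally a condition. Second, and more seriously, forbidden pairs give no reservoir for the $\aleph_1$-completeness check: to intersect countably many members of $\mathbb{D}(N,p)$ you must accommodate countably many side constraints simultaneously, and a countable union of forbidden-pair sets can exhaust all values below $\delta$ at a node of level $\delta$. Shelah's promises are engineered so that each constraint removes only finitely many of the infinitely many disjoint branches of each $\Gamma$, hence a common extension survives. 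Your sketch (``plus enough new forbidden pairs \dots\ there are still unboundedly-many-below-$\delta$ legal values'') asserts the needed conclusion but does not supply a mechanism for it; I do not see how to carry this through without rebuilding the tree-of-promises machinery, at which point you have the paper's forcing rather than yours.
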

The rest of this section is devoted to the proof of the above theorem. 
The forcing notion we use was first defined by Shelah in
\cite{shelah81} (see also \cite[Ch. IX]{Sh:f}). We follow Schlindwein's approach
from  \cite{chaz94}.

For $n<\omega$ let $T^n=\bigotimes_{i<n}T$. We first define an auxiliary forcing notion
$\MPB^0_T$.
\begin{definition}
\label{f3} A condition in $\MPB^0_T$ is a pair $p=(f_p, S_p)$, where:
\begin{enumerate}
\item $S_p \subseteq S_*$ and $cl(S_p) \cap S_* \subseteq S_p$, where $cl(S_p)$ stands for the closure of $S_p$ in $\omega_1$,

\item if $t \in \dom(f_p),$ then $f_p(t) <_T ht_T(t)$,

\item if $s, t \in \dom(f_p)$ and $f_p(s)=f_p(t)$, then $s$ and $t$ are $<_T$-incomparable.
 \end{enumerate}
$\MPB^0_T$ is ordered in the natural way: $p \leq q$ iff $f_{q} \supseteq f_{p}$ and $S_{q}$ end extends $S_p$.
\end{definition}
\begin{definition}
Given an Aronszajn tree $T$, $\vec s \in (T^n)_\beta$, $f: T \upharpoonright S \to \omega_1$
and $F \subseteq \omega_1$ finite, let $\heartsuit(\alpha, \vec s, f, F)$ stand for:
\begin{center}
 if $\xi \in S \cap (\alpha, \beta],$ then $f(s_i \restriction \xi) \notin F$ for all $i<n$.
\end{center}
\end{definition}
\begin{definition}
Suppose $T$ is an Aronszajn tree. We say $\Gamma$ is a promise, if for some club $C \subseteq \omega_1$
and some  $\vec s \in T^n,$
\begin{enumerate}
\item $\Gamma \subseteq T^n \restriction C,$

\item for all $\vec t \in \Gamma, \vec t \geq_{T^n} \vec s$,

\item if $\vec t \in \Gamma$ and $\alpha \in ht_{T^n}(\vec t) \cap C,$ then $\vec t \restriction \alpha \in \Gamma,$

\item suppose $\alpha < \beta$ are in $C$ and $\vec t \in \Gamma(\alpha)=\Gamma \cap (T^n)_\alpha$. Then there exists an infinite $W \subseteq \Gamma(\beta)$ such that:
    \begin{enumerate}
    \item for all $\vec u \in W, \vec u \geq_{T^n} \vec t$,

    \item for all $\vec u^1 \neq \vec u^2$ in $W$, $\vec u^1 \cap \vec u^2=\emptyset,$ in the sense that $\{ u^1_0, \cdots, u^1_{n-1}  \}   \cap \{u^2_0, \cdots, u^2_{n-1}  \}=\emptyset.$
    \end{enumerate}
\end{enumerate}
We write $C(\Gamma)=C, n(\Gamma)=n$ and $\min(\Gamma)=\vec{s}$.
\end{definition}
\begin{definition}
\label{f5} Assume $S_* \subseteq \omega_1$ is stationary, co-stationary and $T$ is an Aronszajn tree. Let also
$(f, S) \in \MPB^0_T$ and let $\Gamma$ be a promise for $T$. We say $(f, S)$ fulfills $\Gamma$
iff:
\begin{enumerate}
\item $S \setminus ht(\min(\Gamma)) \subseteq C(\Gamma)$,

\item Suppose $\beta \in C(\Gamma)$, $\alpha \in C(\Gamma) \cap S \cap \beta$, $\vec t \in \Gamma(\alpha)$
and $F \subseteq \omega_1$ is finite. Then there exists an infinite $W \subseteq \Gamma(\beta)$ such that
\begin{enumerate}
    \item for all $\vec u \in W, \vec u \geq_{T^n} \vec t$,

    \item for all $\vec u^1 \neq \vec u^2$ in $W$, $\vec u^1 \cap \vec u^2=\emptyset,$

    \item for all $\vec u \in W, \heartsuit(\alpha, \vec u, f, F)$ holds.
    \end{enumerate}
\end{enumerate}
\end{definition}
We are finally ready to define the forcing notion $\MPB_T.$
\begin{definition}
\label{f6} A condition in $\MPB_T$ is a triple $p=(f_p, S_p, \Psi_p)$, where
\begin{enumerate}
\item $(f_p, S_p) \in \MPB^0_T$,
\item $\Psi_p$ is a countable set of promises which $(f_p, S_p)$ fulfills.
\end{enumerate}
Given two conditions $p$ and $q$, let $p \leq q$
iff
\begin{enumerate}
\item $(f_p, S_p) \leq_{\MPB^0_T} (f_q, S_q)$,

\item $\Psi_q \supseteq \Psi_p$,

\item for all $\Gamma \in \Psi_p, S_q\setminus S_p \subseteq C(\Gamma)$.
\end{enumerate}
\end{definition}
By   \cite{chaz94}, the forcing notion
$\MPB_T$ satisfies items (1)-(4) of Theorem \ref{k16}.
The proof of the next lemma is similar to the proof of \cite[Theorem 4.6]{Ab-Sh}
\begin{lemma}
\label{g3} Suppose $S^* \subseteq \omega_1$ is stationary co-stationary, $U$ is a Souslin tree and $T$ is an Aronszajn tree.
If $\Vdash_{U}$``$T$ is Aronszajn'', then $\Vdash_{\MPB_T}$``$U$ is Souslin''.
\end{lemma}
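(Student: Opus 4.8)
The plan is to prove the contrapositive-flavored statement directly: assuming $U$ is Souslin and $\Vdash_U$``$T$ is Aronszajn'', show that $\MPB_T$ forces $U$ to remain Souslin. By Lemma \ref{f8} (applied trivially, since here there is no iteration, just the single step $\MPB_T$) it suffices to show that $\MPB_T$ is proper — which we already have from Theorem \ref{k16}(1), since $<\omega_1$-properness implies properness — and that $\Vdash_{\MPB_T}$``$U$ is Souslin''. So the real content is the second clause. Equivalently, I will show that forcing with $\MPB_T \times U$ is proper and preserves $\omega_1$, or more precisely that in $V^{\MPB_T}$ the tree $U$ has no uncountable antichain; since $U$ has height $\omega_1$ and $\MPB_T$ adds no reals (Theorem \ref{k16}(3)) hence no new $\omega$-sequences and in particular no new branches of length $<\omega_1$, it is enough to rule out a new uncountable antichain of $U$. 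The standard route, following \cite[Theorem 4.6]{Ab-Sh}, is to analyze the two-step product $\MPB_T * U$ — or rather, since $U \in V$, the product $U \times \MPB_T$ — and show it has a strong enough properness property that $\omega_1$ is preserved and $U$ stays Souslin.

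The key steps, in order. First, reduce to showing: for every large regular $\chi$ and every countable $N \prec (\cH(\chi),\in,\lhd)$ containing $T, U, S_*, \MPB_T$ with $\delta := N \cap \omega_1$, and every condition $(f,S,\Psi) \in \MPB_T \cap N$ and every $u \in U_\delta$ (a node on the $\delta$-th level), there is a condition $(f',S',\Psi') \geq (f,S,\Psi)$ in $\MPB_T$ which is $(N,\MPB_T)$-generic and which forces that $u$ is $(N,U)$-generic — i.e. forces that the generic branch of $U$ below $u$ meets every dense subset of $U$ lying in $N$. This is the usual "generic condition forces generic branch" criterion for preserving Souslinity; once it holds for club-many $N$, a standard argument shows no uncountable antichain of $U$ can be added. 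Second, carry out the construction of $(f',S',\Psi')$. One builds an increasing $\omega$-chain of conditions inside $N$, together with a matching descending sequence of nodes of $U$, working along a surjection of $\omega$ onto (dense subsets of $U$ in $N$) $\times$ (dense subsets of $\MPB_T$ in $N$) $\times$ (nice $\MPB_T$-names for dense subsets of $U$); at stage $n$ one extends the $\MPB_T$-side into the relevant dense set (meeting it inside $N$, using that $\MPB_T$ is proper and the sets are in $N$) and one extends the $U$-side into $U$'s dense set, using that $U$ is Souslin in $V$ — equivalently $U$ has the $\aleph_1$-c.c., so every dense set of $U$ in $N$ is pre-dense and a descending chain of conditions can be threaded below $u$. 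Third, take unions: $S' = S \cup \{\delta\} \cup \bigcup_n S_{p_n}$ (adding the single point $\delta$, which one must check keeps property (1) of Definition \ref{f3}, i.e. $\mathrm{cl}(S') \cap S_* \subseteq S'$ — this is where $S_*$ being co-stationary, hence $\delta$ being choosable outside the accumulation points that matter, or simply $\delta \in S_*$ being handled by the closure condition, enters), $f' = \bigcup_n f_{p_n}$, and $\Psi' = \bigcup_n \Psi_{p_n}$; then verify that $(f',S',\Psi')$ is a genuine condition — the serious check is that $f'$ together with $S'$ still fulfills every promise in $\Psi'$, which is exactly the point of Definition \ref{f5}'s clause requiring the $\heartsuit$-avoidance to be arrangeable: at the limit one needs, for the newly added level $\delta$, an infinite pairwise-disjoint $W \subseteq \Gamma(\beta)$ avoiding any prescribed finite $F$, and this is delivered by clause (4) of the definition of promise combined with the bookkeeping that ensured each finite $F \subseteq \delta$ appearing in $N$ was handled.

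The main obstacle I expect is the simultaneous limit construction: one must dovetail three competing demands — (a) meeting all dense sets of $\MPB_T$ in $N$ so that $(f',S',\Psi')$ is $(N,\MPB_T)$-generic, (b) meeting all ($\MPB_T$-names for) dense sets of $U$ in $N$ below the fixed node $u$ so that $u$ is forced $(N,U)$-generic, and (c) keeping $(f',S',\Psi')$ a legitimate condition, i.e. preserving all promises when the new point $\delta$ is adjoined to $S'$. Demand (c) is what forces the promise machinery (the $\heartsuit$ predicate, the "fulfills" relation, the infinite pairwise-disjoint witness sets) into the definition of $\MPB_T$ in the first place, and getting the bookkeeping to interleave (a), (b), (c) correctly — in particular making sure that when we extend along a dense set of $\MPB_T$ we do not add points to $S$ below $\delta$ that violate some promise, and that the finitely many promise-constraints active at stage $n$ can all be met by a single extension — is the technical heart. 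This is precisely the place where Schlindwein's reorganization \cite{chaz94} of Shelah's argument, and the analogue in \cite[Theorem 4.6]{Ab-Sh}, does the work, and I would follow that template, citing Theorem \ref{k16} for the properties of $\MPB_T$ that make each individual extension step go through and citing the Souslinity (equivalently $\aleph_1$-c.c.) of $U$ in $V$ for the extension steps on the $U$-side. The hypothesis $\Vdash_U$``$T$ is Aronszajn'' is used to guarantee that $\MPB_T$ still makes sense and behaves well over $V^U$ — in particular that the promises, which are defined relative to $T$ being Aronszajn, remain promises after forcing with $U$ — so that the product analysis $U \times \MPB_T \cong U * \MPB_T$ can treat $\MPB_T$ as "the same kind of forcing" in the extension.
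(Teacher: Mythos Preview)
Your plan is correct and matches the paper's approach: the paper gives no proof of its own, merely stating that the argument is similar to \cite[Theorem 4.6]{Ab-Sh}, and your outline is precisely an expansion of that argument adapted to the $\MPB_T$ of Schlindwein's presentation. One sharpening worth recording: the hypothesis $\Vdash_U$``$T$ is Aronszajn'' is used not at the finite extension steps (those go through using only the Souslinity of $U$ to thread below $u$ via $(N,U)$-genericity of $b_u$) but at the limit, where you need the $\mathbb D$-completeness of $\MPB_T$ to apply to the virtual model $N[b_u]$---and that requires $T$ to remain Aronszajn from the point of view of $N[b_u]$, equivalently in $V^U$.
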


\section{An encoding theorem}
\label{g18}
In this section we prove an encoding theorem (see Theorem \ref{k19}), which will be used in the next section for the proof of theorem \ref{th1}.
The next definition is an analogue of \cite[Definition 7.1]{Ab-Sh}, where instead of working with special trees we work with $S$-st-Special trees.
\begin{definition}
\label{k14} Suppose $S \subseteq \omega_1$ is stationary, $I$ is an $\omega_1$-like linear order \footnote{Here by an $\omega_1$-like linear order we man an uncountable order all of whose initial segments are countable and such that $I$ has  a fist element and every element has a successor.} and $(\bold{su}, \bold{sp})$
is a partition of $[I]^{<\omega}\setminus \{\emptyset\}$ such that $\bold{su}$ is closed under subsets. We say that an
$I$-sequence $\langle  T_i: i \in I  \rangle$ of $\aleph_1$-Aronszajn trees has the pattern
$S$-$(\bold{su}, \bold{sp})$ if:
\begin{enumerate}
\item for $d \in \bold{su}$, every derived tree of $\bigoplus_{i\in d}T_i$ is Souslin,

\item For $d \in \bold{sp}$, $\bigotimes_{i\in d}T_i$ is $S$-st-special.
\end{enumerate}
\end{definition}
The next lemma shows that $\Diamond_{\omega_1}$  guarantees the existence of $S$-$(\bold{su}, \bold{sp})$ patterns for all stationary sets
$S \subseteq \omega_1$ and suitable partitions $(\bold{su}, \bold{sp})$ of $[\omega_1]^{<\omega}\setminus \{\emptyset\}.$
\begin{lemma} (\cite[Theorem 2.2]{Ab-Sh})
\label{k12}  Assume $\Diamond_{\omega_1}$ holds. Let $\bold{sp}$ be a collection of non-empty finite
subsets of $\omega_1$ closed under supersets and let $\bold{su}$ be those non-empty finite subsets of $\omega_1$
which are not in $\bold{sp}$. Then there exists a sequence $\langle T^\xi: \xi < \omega_1    \rangle$
of $\aleph_1$-trees which is an $S$-$(\bold{su}, \bold{sp})$-pattern, indeed for each finite set $d=\{\xi_0, \cdots, \xi_{n-1}   \}$,
\begin{enumerate}
\item if $d \in \bold{su}$, then $\bigoplus_{i<n} T^{\xi_i}$ and all of its derived trees are Souslin,

\item if $d \in \bold{sp}$, then $\bigotimes_{i<n} T^{\xi_i}$ is special.
\end{enumerate}
\end{lemma}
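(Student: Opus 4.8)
The plan is to build the entire sequence $\langle T^\xi : \xi<\omega_1\rangle$ simultaneously, by a recursion of length $\omega_1$ driven by a $\Diamond_{\omega_1}$-sequence. Each $T^\xi$ will be a normal $\aleph_1$-tree, constructed level by level, so that at stage $\delta$ the already-built part $T^\xi\restriction\delta$ is, via a fixed coding, a hereditarily countable set depending only on the first $\delta$ steps; fixing a pairing of $\omega_1$ with $[\omega_1]^{<\omega}\times\omega_1\times H(\omega_1)$, the $\Diamond$-sequence is then read at each limit $\delta$ as a guess $(e,\alpha,\vec s,A)$ with $e\in\mathbf{su}$ a finite subset of $\delta$, $\alpha<\delta$, $\vec s$ a tuple of level-$\alpha$ nodes of $\bigoplus_{i\in e}T^i$ singling out a derived tree $D_{\vec s}$ (in the sense of Definition~\ref{f2}), and $A\subseteq D_{\vec s}\restriction\delta$. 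The Souslin requirements in clause~(1) of Definition~\ref{k14} will be obtained by the usual $\Diamond$-sealing of maximal antichains of derived trees, and the specialization requirements in clause~(2) by carrying along, for each $d\in\mathbf{sp}$ with $d\subseteq\delta$, a partial specializing function $c_d$ of $\bigotimes_{i\in d}T^i$ together with a ``room'' invariant, exactly as in the classical construction of a special Aronszajn tree. (Since the $T^\xi$ will all turn out to be special or Souslin, hence Aronszajn, and full specialization is in particular $S$-st-specialization for every stationary $S$, the sequence will be an $S$-$(\mathbf{su},\mathbf{sp})$-pattern for every $S$.)

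At a successor step one splits each node into infinitely many immediate successors and extends the already-defined $c_d$, keeping the room invariants; this is routine. At a limit stage $\delta$ one must choose, for each $\xi$, a countable family $B^\xi$ of branches of $T^\xi\restriction\delta$ to be realized as the new $\delta$-th level, with some member of $B^\xi$ above each node (to stay normal with countable levels), subject to: (i) if the guessed $(e,\alpha,\vec s,A)$ is \emph{active}, i.e.\ $A$ is a maximal antichain of $D_{\vec s}\restriction\delta$, then every realized branch of each $T^i$ ($i\in e$) fitting the configuration $\vec s$ passes above some element of $A$, which seals $A$ forever; and (ii) for every $d\in\mathbf{sp}$ with $d\subseteq\delta$, $c_d$ extends to the new level of $\bigotimes_{i\in d}T^i$, which is possible precisely because the room invariant ensures that each realized product-branch $\langle\bar b_i\rangle_{i\in d}$ (with $\bar b_i\in B^i$) leaves a colour unused. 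The essential organizational observation is that $\delta$ is countable, so at stage $\delta$ there are only countably many relevant $d$ (those with $d\subseteq\delta$; the others we postpone), and likewise only countably many trees $T^\xi$ with $\xi<\delta$ on which there are any constraints; so (i) and (ii) together with normality form one countable list of demands, met by an internal $\omega$-recursion that builds the branches level by level below $\delta$ while keeping all the room invariants.

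Afterward the verification is short. Any derived tree $D_{\vec s}$ of $\bigoplus_{i\in e}T^i$, $e\in\mathbf{su}$, is Aronszajn since one of its factors is (remark after Definition~\ref{f2}); if $A$ is any maximal antichain of it, then $\{\delta : A\restriction\delta \text{ is a maximal antichain of } D_{\vec s}\restriction\delta\}$ contains a club, so by $\Diamond$ there is such a $\delta$ at which $A$ is the guessed, active antichain and is therefore sealed and hence countable; so $D_{\vec s}$ is c.c.c., hence Souslin. Taking one-node derived trees, every cone of every $T^i$ with $i\in e$ is Souslin, so each $T^i$, and then $\bigoplus_{i\in e}T^i$, is Souslin. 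For $d\in\mathbf{sp}$, $c_d$ is by construction a total specialization of $\bigotimes_{i\in d}T^i$, so that tree is special.

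The main obstacle is the compatibility of the limit-stage demands. All of them are ``positive'': normality wants merely some branch above each node; sealing wants only that the relevant realized branches pass above $A$, and this is consistent with normality because every node of $D_{\vec s}\restriction\delta$ is comparable to an element of the maximal antichain $A$ (if above one, any branch through it works; if below one, run a branch through that element); and extending $c_d$ wants only a branch with room, which the invariant supplies. Furthermore the demands from $\mathbf{su}$ and those from $\mathbf{sp}$ never pull the same product in opposite directions, because $\mathbf{su}$ is an ideal and $\mathbf{sp}$ a filter on $[\omega_1]^{<\omega}$, so no $d\in\mathbf{sp}$ is a subset of any $e\in\mathbf{su}$ and a ``special'' product $\bigotimes_{i\in d}T^i$ always involves a tree outside any Souslin configuration coming from $\mathbf{su}$. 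Arranging the internal $\omega$-recursion at each limit so that all these constraints hold at once while the room invariants survive is the technical heart of the argument --- it is the content of \cite[Theorem~2.2]{Ab-Sh}.
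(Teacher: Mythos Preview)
The paper does not prove this lemma at all: it is stated with the citation \cite[Theorem~2.2]{Ab-Sh} and used as a black box, with no argument given. Your sketch is a faithful outline of the Abraham--Shelah construction from that reference (simultaneous level-by-level building of all $T^\xi$, $\Diamond$-guessing and sealing of maximal antichains of derived trees for $d\in\mathbf{su}$, and carrying partial specializing maps with a ``room'' invariant for $d\in\mathbf{sp}$), so there is nothing in the present paper to compare it against beyond noting that your approach is the intended one.
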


\begin{definition} (\cite[Definition 7.2]{Ab-Sh})
\label{k15}
\begin{enumerate}
\item A collection $\mathcal U$ of Souslin trees is \emph{primal} if all derived trees of trees in
$\mathcal U$ are Souslin and for any Souslin tree $T$, there exists some $U \in \mathcal U$
such that a derived tree of $U$
is club embeddable into $T$.

\item  Suppose $S \subseteq \omega_1$ is stationary and $I$ is an $\omega_1$-like linear order. The $I$-sequence $\langle  T_i: i \in I  \rangle$ with pattern $S$-$(\bold{su}, \bold{sp})$ is called primal if the collection $\mathcal U=\{\bigoplus_{i\in d}T_i: d \in \bold{su}   \}$
    is primal.
\end{enumerate}
\end{definition}
\begin{theorem}
\label{k17} Assume $S^* \subseteq \omega_1$ is stationary and let $\mathcal U$ be a collection of Souslin trees such that
for all $U \in \mathcal U,$ all derived trees of $U$ are Souslin. Then there is a
forcing notion $\MPB=\MPB_{S^*, \mathcal{U}}$ such that:
\begin{enumerate}
\item $\MPB$ adds no new countable sequences and is $\aleph_2$-c.c.,


\item in $V^{\MPB}$, $\mathcal U$ is a primal collection of Souslin trees.

\end{enumerate}
\end{theorem}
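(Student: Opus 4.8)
The plan is to build $\MPB = \MPB_{S^*,\mathcal U}$ as a countable support iteration $\langle \MPB_\alpha, \name{\MQB}_\alpha : \alpha < \omega_2\rangle$ of length $\omega_2$ which treats one prospective Souslin tree at each stage. Fixing a bookkeeping function, at stage $\alpha$ let $\name{T}_\alpha$ be the $\MPB_\alpha$-name for a tree on $\omega_1$ it hands us, and define, working in $V^{\MPB_\alpha}$: if $\name{T}_\alpha$ denotes a Souslin tree $T$ such that no derived tree of any $U\in\mathcal U$ is club embeddable into $T$, then let $\name{\MQB}_\alpha$ name the specializing forcing $\MPB_T$ of Theorem \ref{k16}, taken with the given stationary set $S^*$ (note that $T$ is then Aronszajn, as required there); otherwise let $\name{\MQB}_\alpha$ be trivial forcing. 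Each iterand is thus trivial or of the form $\MPB_T$, hence $<\omega_1$-proper and $\mathbb D$-complete for a simple $2$-completeness system (Theorem \ref{k16}(1), an $\aleph_1$-completeness system being a fortiori a $2$-completeness system) and $\aleph_2$-p.i.c.\ (Theorem \ref{k16}(2)); so by Theorem \ref{ecc and pic and chain condition} (using CH) the whole iteration $\MPB$ is $\aleph_2$-c.c. Since conditions have countable support and $\MPB$ is $\aleph_2$-c.c., every subset of $\omega_1$ in $V^{\MPB}$ lies in some $V^{\MPB_\alpha}$ with $\alpha<\omega_2$ by the usual argument; I would arrange the bookkeeping so that every $\MPB_\alpha$-name for a tree on $\omega_1$ ($\alpha<\omega_2$) equals $\name{T}_\beta$ for some $\beta\ge\alpha$. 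Clause (1) now follows: a simple $2$-completeness system is as needed for Theorem \ref{pre22}, so the iteration adds no new reals, and being proper it therefore adds no new countable sequences; the $\aleph_2$-c.c.\ was just noted.

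For clause (2) I would first verify that all derived trees of trees in $\mathcal U$ stay Souslin. The claim, proved by induction on $\gamma\le\omega_2$, is that $\MPB_\gamma$ is proper (standard for countable support iterations of proper forcings) and forces every derived tree of every $U\in\mathcal U$ to be Souslin, the case $\gamma=0$ being the hypothesis of the theorem. At a limit $\gamma$: fix $U\in\mathcal U$ and a derived tree $U_{\vec s}$ of $U$; by the induction hypothesis each $\MPB_\delta$ with $\delta<\gamma$, hence each iterand $\name{\MQB}_\delta$ over $V^{\MPB_\delta}$, is proper and forces $U_{\vec s}$ to be Souslin, so Lemma \ref{f8} applied to the iteration up to $\gamma$ yields that $\MPB_\gamma$ forces $U_{\vec s}$ Souslin, and since $U_{\vec s}$ was arbitrary, $\MPB_\gamma$ forces all derived trees of $\mathcal U$ Souslin. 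At a successor $\gamma=\delta+1$: work in $V^{\MPB_\delta}$; if $\name{\MQB}_\delta$ is trivial there is nothing to prove, so suppose $\name{\MQB}_\delta=\MPB_{T_\delta}$ with $T_\delta$ a Souslin tree into which no derived tree of any $U\in\mathcal U$ is club embeddable. Fix $U\in\mathcal U$ and a derived tree $U_{\vec s}$ of $U$, which is Souslin by the induction hypothesis. For any node $\vec u$ of $U_{\vec s}$ the restriction $(U_{\vec s})_{\vec u}$ is again a derived tree of $U$ (directly from Definitions \ref{f1} and \ref{f2}) and so is not club embeddable into $T_\delta$; by Lemma \ref{f9} this means that no condition of $U_{\vec s}$ forces $T_\delta$ to acquire a cofinal branch, i.e.\ $\Vdash_{U_{\vec s}}$ ``$T_\delta$ is Aronszajn''. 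Lemma \ref{g3}, applied with $U_{\vec s}$ in place of $U$ and $T_\delta$ in place of $T$, then gives $\Vdash_{\MPB_{T_\delta}}$ ``$U_{\vec s}$ is Souslin''; as $U_{\vec s}$ was arbitrary, $\MPB_{\delta+1}$ forces all derived trees of $\mathcal U$ Souslin. Taking $\gamma=\omega_2$: in $V^{\MPB}$ all derived trees of trees in $\mathcal U$ are Souslin (hence so are the trees of $\mathcal U$).

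Next I would show that in $V^{\MPB}$ every Souslin tree admits a club embedding of a derived tree of some $U\in\mathcal U$. Let $T$ be a Souslin tree in $V^{\MPB}$. By the $\aleph_2$-c.c., $T$ (as a tree on $\omega_1$) already lies in some $V^{\MPB_{\alpha_0}}$ with $\alpha_0<\omega_2$, and since any uncountable antichain of $T$ occurring in an intermediate model would persist to $V^{\MPB}$, $T$ is Souslin in every $V^{\MPB_\gamma}$ for $\alpha_0\le\gamma\le\omega_2$. By the bookkeeping there is $\beta\ge\alpha_0$ with $\name{T}_\beta=\name{T}$, so $T$ is a Souslin tree in $V^{\MPB_\beta}$. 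Suppose for contradiction that in $V^{\MPB_\beta}$ no derived tree of any $U\in\mathcal U$ is club embeddable into $T$; then by construction $\name{\MQB}_\beta=\MPB_T$, so by Theorem \ref{k16}(4) $T$ becomes $S^*$-st-special in $V^{\MPB_{\beta+1}}$. An $S^*$-specializing function for $T$ is a regressive function on $T\upharpoonright S^*$ with pairwise $<_T$-incomparable fibres, and these three properties are absolute between models with the same $\omega_1$, so $T$ remains $S^*$-st-special in $V^{\MPB}$ and is therefore not Souslin there by Lemma \ref{s-specialimpliesnosuslin} --- contradicting the choice of $T$. Hence in $V^{\MPB_\beta}$ some derived tree $U_{\vec s}$ of some $U\in\mathcal U$ is club embeddable into $T$; the witnessing club and strictly increasing height-preserving map survive to $V^{\MPB}$, so this club embedding still exists in $V^{\MPB}$. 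Together with the previous paragraph, $\mathcal U$ is a primal collection of Souslin trees in $V^{\MPB}$, which is clause (2).

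The step I expect to be the main obstacle is the preservation argument of the second paragraph: one has to run the induction on the iteration length using the single-tree iteration-preservation theorem (Lemma \ref{f8}) at limit stages, and at each specializing stage convert the bookkeeping condition ``no derived tree of $\mathcal U$ club embeds into $T_\delta$'' into the hypothesis ``$\Vdash_{U_{\vec s}}$ $T_\delta$ is Aronszajn'' of the killing lemma \ref{g3}, via Lemma \ref{f9} and the fact that every restriction of a derived tree of $U$ above a node is again a derived tree of $U$. The remaining ingredients --- the bookkeeping, the $\aleph_2$-c.c., and the absence of new reals --- are routine given the results quoted in Sections \ref{s-st-special} and \ref{g15}.
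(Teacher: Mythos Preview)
Your proof is correct and follows essentially the same strategy as the paper: a countable support iteration of length $\omega_2$ of the $S^*$-specializing forcings $\MPB_T$ from Theorem~\ref{k16}, with bookkeeping to catch every candidate Souslin tree, and Lemmas~\ref{g3} and~\ref{f8} to preserve the Souslinness of derived trees of members of $\mathcal U$. The one substantive difference is in how the decision to specialize at stage~$\alpha$ is phrased. The paper specializes $T$ when for every derived tree $U^\dag$ of every $U\in\mathcal U$ one has $\Vdash_{U^\dag}$``$T$ is Aronszajn'', which is literally the hypothesis of Lemma~\ref{g3}, so the preservation at successor stages is immediate; Lemma~\ref{f9} is only invoked at the end, once, to produce the club embedding. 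You instead specialize when no derived tree of any $U\in\mathcal U$ is club embeddable into $T$, and then recover the hypothesis of Lemma~\ref{g3} in the preservation step via Lemma~\ref{f9} together with the observation that $(U_{\vec s})_{\vec u}$ is again a derived tree of $U$. These two conditions are equivalent (precisely by that observation), so this is a matter of where the work is done rather than a different argument; your formulation has the mild advantage that the conclusion of clause~(2) is read off directly from the negation of the specializing condition, while the paper's formulation makes the preservation step shorter. Your induction on the iteration length is also spelled out in more detail than the paper's one-line appeal to Lemmas~\ref{g3} and~\ref{f8}.
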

\begin{proof}
The proof of the theorem is as in \cite[Theorem 7.3(2)]{Ab-Sh}, where instead of the forcing notions $\mathscr S(T)$ used there we use the forcing notions $\MPB_T$
of Theorem \ref{k16}. For completeness we give a proof.

Let $\Phi: \omega_2 \to \cH(\omega_2)$ be such that for each $x  \in \cH(\omega_2)$,
$\Phi^{-1}(x)\subseteq \omega_2$ is cofinal in $\omega_2$, whose existence follows from $\text{GCH}$.
Let
\[
\MPB=\langle \langle \MPB_\alpha: \alpha \leq \omega_2             \rangle, \langle  \dot{\MQB}_\alpha: \alpha<\omega_2       \rangle\rangle
\]
be a countable support iteration of forcing notions so that at stage $\alpha$ of the iteration we force with the trivial forcing notion  unless
the following conditions are satisfied:
\begin{enumerate}
\item $\Phi(\alpha)$ is a $\MPB_\alpha$-name of an Aronszan tree,
\item for any Souslin tree $U \in \mathcal{U}$ and any derived tree $U^\dag$ of $U$, in $V^{\MPB_\alpha}$,
$\Vdash_{U^\dag}$`` $\Phi(\alpha)$ is Aronszajn''.\footnote{Recall that by lemmas \ref{g3} and \ref{f8}, $U^\dag$ remains Souslin in $V^{\MPB_\alpha}$.}
\end{enumerate}
In this case we let $\dot{\MQB}_\alpha$ be a $\MPB_\alpha$-name such that $\Vdash_{\MPB_\alpha}$`` $\dot{\MQB}_\alpha=\MPB_{\Phi(\alpha)}$''.

By Theorem \ref{k16} and the results of subsection \ref{su1}, $\MPB$ satisfies item (1)  of the theorem.
For clause (2), first note that by Lemmas \ref{g3} and \ref{f8}, all trees in $\mathcal U$ and their derived trees remain Souslin in the generic extension
$V^{\MPB}$. Next suppose that $T \in V^{\MPB}$ is a Souslin tree and let $\dot{T}$ be a $\MPB$-name for it. Let $\alpha < \omega_2$ be such that $\dot{T}$ is a $\MPB_\alpha$-name and $\Vdash_{\MPB_\alpha}$`` $\dot T=\Phi(\alpha)$''. At stage $\alpha$ of the iteration, we should force with the trivial forcing, as otherwise
we will have $\Vdash_{\MPB_\alpha}$`` $\dot{\MQB}_\alpha=\MPB_{\dot{T}}$'', hence $T$ becomes $S$-st-special in
$V^{\MPB_{\alpha+1}}$ and hence in $V^{\MPB}$, which contradicts Lemma \ref{s-specialimpliesnosuslin}.
It follows that for some $U \in \mathcal U$ and some derived tree $U^\dag$ of $U$,
\[
V^{\MPB_\alpha} \models \ulcorner \Vdash_{U^\dag}\text{``~}\dot{T} \text{ is not Aronszajn''} \urcorner.
\]
We may assume that $U^\dag$ is of minimal dimension, so that it is a normal Souslin tree.
By Lemma \ref{f9}, $U^\dag$ is club embeddable into $T$\footnote{This holds in $V^{\MPB_\alpha}$ and hence also in $V^{\MPB}$.}.
\end{proof}

The proof of the following theorem is essentially the same as in \cite{Ab-Sh}, where instead of Theorem 7.3 from there we use Theorem \ref{k17} .
\begin{theorem} (Encoding theorem)
\label{k19} Assume $S^*$ is a stationary  subset of $\omega_1$ which only contains limit ordinals. There is a sentence $\Psi$
in the Magidor-Malitz logic which contains among other things a one-place predicate $I$ for an $\omega_1$-like linear order and  one-place predicates $P(x) $ and $S(x)$
such that the following holds. Given any $X \subseteq \omega_1$:
\begin{enumerate}
\item there is a model $M \models \Psi$ enriching $(\omega_1, <, X)$ such that  $P^M=X$
and $S^M=S^*,$

\item there is an $\aleph_2$-c.c. generic extension of the universe which adds no new countable sets and such that in it the following holds: if $N$ is a model of $\Psi$,
then $I^N$ has order type $\omega_1$, $P^N=X$ and $S^N=S^*,$

\item there is an $\aleph_2$-c.c. generic extension of the universe which adds no new countable sets, in which
 $M$ is up to isomorphism the only model of $\Psi$
\end{enumerate}
\end{theorem}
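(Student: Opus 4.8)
The plan is to transcribe the Abraham--Shelah encoding from \cite[\S 7]{Ab-Sh} into the present framework, with two substitutions dictated by the earlier sections: every ``special'' tree becomes an $S^*$-st-special one --- legitimate because, $S^*$ being a set of limit ordinals, the restriction of a specialising function $c:T\to\omega$ to $T\upharpoonright S^*$ is an $S^*$-st-specialising function (clause (2) of Definition \ref{s-special trees} holds since $c(t)\in\omega\le\delta$, and clauses (1),(3) persist) --- and the tree-killing forcing of \cite{Ab-Sh} is replaced by the forcing $\MPB_T$ of Theorem \ref{k16}, whose relevant iteration has already been isolated as Theorem \ref{k17}. The sentence $\Psi$ must be a Magidor--Malitz sentence (so that, by Lemma \ref{g12}, ``$\Psi$ has a model'' is $\Sigma^2_2$, which is what Section \ref{g19} will need); beyond designing $\Psi$ the work is three model-theoretic verifications.

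\textbf{The sentence $\Psi$.} First I would fix a simple definable correspondence $X\mapsto(\mathbf{su}_X,\mathbf{sp}_X)$ between subsets of $\omega_1$ and partitions of $[\omega_1]^{<\omega}\setminus\{\emptyset\}$ with $\mathbf{su}_X$ closed under subsets and $X$ recoverable ``locally'' from the partition: e.g.\ put $\{2\alpha\},\{2\alpha+1\}\in\mathbf{su}_X$ for every $\alpha$, put $\{2\alpha,2\alpha+1\}\in\mathbf{sp}_X$ iff $\alpha\in X$, and let $\mathbf{sp}_X$ be the upward closure of these pairs, so that ``$\alpha\in X$'' is detected by whether $T_{2\alpha}\otimes T_{2\alpha+1}$ is $S$-st-special or Souslin. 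Then $\Psi$ would be a sentence of $L(Q^{MM})$ over a vocabulary with a binary ``membership'' symbol (the universe coding a copy of $\cH(\omega_1)$, which under CH has size $\aleph_1$), the order symbol $<$, unary predicates $I,P,S$, and finitely many auxiliary relation and function symbols, asserting: the universe codes a model of a fixed finite fragment of $\ZFC^{-}$ whose ordinals are $(\omega_1,<)$, with $P,S\subseteq\omega_1$; $I$ is an $\omega_1$-like linear order; there is an $I$-sequence $\langle T_i:i\in I\rangle$ of $\aleph_1$-Aronszajn trees (coded by a ternary relation) realising the pattern $S$-$(\mathbf{su}_P,\mathbf{sp}_P)$ of Definition \ref{k14}, where for $d\in\mathbf{sp}_P$ an $S$-st-specialising function of $\bigotimes_{i\in d}T_i$ is named by a function symbol (so the clauses of Definition \ref{s-special trees} become first-order conditions on it) and for $d\in\mathbf{su}_P$ the statement ``$\bigoplus_{i\in d}T_i$ and each of its derived trees have no uncountable antichain'' is written via a formula of the shape $\neg\,Q^{MM}xy\,\theta$ relativised to the relevant tree; together with whatever part of primality (Definition \ref{k15}) is first-order expressible over such a structure. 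The remaining, non-expressible half of primality --- that \emph{every} Souslin tree receives a club embedding from a derived tree of some $\bigoplus_{i\in d}T_i$, $d\in\mathbf{su}_P$ --- will not be part of $\Psi$ but will be supplied, for the true family, by the forcing of Theorem \ref{k17}.

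\textbf{The three items.} For (1), since $\GCH$ gives $\Diamond_{\omega_1}$, I would apply Lemma \ref{k12} to $(\mathbf{su}_X,\mathbf{sp}_X)$ to obtain an $S^*$-$(\mathbf{su}_X,\mathbf{sp}_X)$-pattern $\langle T^\xi:\xi<\omega_1\rangle$ with all $\mathbf{su}_X$-sums and their derived trees Souslin and all $\mathbf{sp}_X$-products special (hence $S^*$-st-special), then expand $(\omega_1,<,X)$ to a structure $M$ whose universe codes $\cH(\omega_1)$, interpreting $I$ as $(\omega_1,<)$, $P$ as $X$, $S$ as $S^*$, the tree sequence by $\langle T^\xi:\xi<\omega_1\rangle$ and the auxiliary symbols by the corresponding specialising functions (and whatever expressible primality data the construction provides); then $M\models\Psi$, $P^M=X$, $S^M=S^*$. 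For (2), I would apply Theorem \ref{k17} with this $S^*$ and $\mathcal{U}=\{\bigoplus_{i\in d}T^\xi_i:d\in\mathbf{su}_X\}$ to get the $\aleph_2$-c.c., no-new-countable-sets forcing $\MPB=\MPB_{S^*,\mathcal{U}}$ after which $\mathcal{U}$ is primal; then, working in $V^{\MPB}$ and given $N\models\Psi$, an absoluteness analysis (no reals added, $\MPB$ is $\aleph_2$-c.c., and being $S^*$-st-special is an upward-absolute positive property whereas being Souslin is not accidentally destroyed) identifies the well-founded part of $N$ with a genuine initial segment and its $\mathbf{sp}$- and $\mathbf{su}$-trees with genuine $S^*$-st-special, resp.\ Souslin, $\aleph_1$-trees of $V^{\MPB}$; primality of the true $\mathcal{U}$ together with Lemmas \ref{f9} and \ref{s-specialimpliesnosuslin} then leaves no room for a nonstandard $I^N$ (which, through the pattern, would manufacture a Souslin tree carrying a cofinal branch or a club embedding incompatible with $\mathcal{U}$), so $I^N$ has order type $\omega_1$; finally $S^N=S^*$ because $S$ is tied to the genuine stationary set seen through the specialising functions, and $P^N=X$ because $P$ is recovered locally from the genuine pattern. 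For (3), I would iterate once more over the model of (2), still $\aleph_2$-c.c.\ and adding no reals by Theorems \ref{pre22} and \ref{ecc and pic and chain condition}, to specialise away every Souslin tree not club-equivalent to a derived tree of $\mathcal{U}$ (making the primal family unique up to a canonical isomorphism) and to add whatever coding data fixes the rest of the structure; item (2) then forces every $N\models\Psi$ to be isomorphic to $M$.

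\textbf{Main obstacle.} I expect the hard part to be item (2): showing that after forcing with $\MPB_{S^*,\mathcal{U}}$ the single sentence $\Psi$ is rigid enough that \emph{every} model of it --- including nonstandard ones --- is honest about $I$, $P$ and $S$. This is precisely where primality of $\mathcal{U}$ (Theorem \ref{k17}) is indispensable, together with the no-new-reals property of $\MPB_{S^*,\mathcal{U}}$ and the elementary fact that $S^*$-st-special trees are Aronszajn and non-Souslin (Lemma \ref{s-specialimpliesnosuslin}); getting the absoluteness bookkeeping right --- in particular the interplay of the $Q^{MM}$-quantifier, the internally coded copy of $\cH(\omega_1)$, and the true $\omega_1$ of the extension --- is the technically demanding core, carried out in detail in \cite{Ab-Sh}.
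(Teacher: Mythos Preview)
Your overall architecture is right and matches the paper: design a Magidor--Malitz sentence $\Psi$ asserting the existence of an $I$-indexed family of Aronszajn trees realising a $(\mathbf{su},\mathbf{sp})$-pattern (with $S$-st-special in place of special), build the model using Lemma~\ref{k12}, and force primality via Theorem~\ref{k17} to pin the pattern down. But two concrete pieces of your encoding do not do the job.

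First, your ad hoc pattern (singletons in $\mathbf{su}$, and $\{2\alpha,2\alpha+1\}\in\mathbf{sp}$ iff $\alpha\in X$) is not rigid enough to run item~(2). Primality only tells you that each Souslin tree of $N$ receives a club embedding from a derived tree of the true $\mathcal{U}$; it does not tell you \emph{which} $T^\xi$ corresponds to which $(T_i)^N$, so you cannot conclude $I^N\cong\omega_1$ or $P^N=X$ from your pattern alone. The paper avoids this by using the specific notion of a \emph{simple pattern} from \cite[\S7.1]{Ab-Sh}, engineered so that a uniqueness theorem (\cite[Theorem 7.4]{Ab-Sh}) applies: once the family is primal, any $I$-sequence realising a simple pattern is, after an order-isomorphism $I^N\cong\omega_1$, the same pattern. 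That combinatorial uniqueness result is the real engine of (2), and your sketch ``primality \ldots\ leaves no room for a nonstandard $I^N$'' is standing in for it without a substitute.

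Second, and more sharply, nothing in your $\Psi$ forces $S^N=S^*$. Having $S^N$-st-specialising functions for the $\mathbf{sp}$-products does not pin down $S^N$: a tree that is $S^*$-st-special can also be $S'$-st-special for many other stationary $S'$, so a model $N$ could choose $S^N\neq S^*$ and still satisfy your clauses. The paper fixes this by encoding $S^*$ into the pattern itself: it sets $\tilde X=S^*\cup\{\xi+1:\xi\in X\}$, arranges the simple pattern so that $\tilde X=\{\xi:\langle3,5,6,\xi\rangle\in\mathbf{su}\}$, introduces a predicate $\tilde P$ with $\tilde P(\xi)\leftrightarrow\langle3,5,6,\xi\rangle\in\mathbf{su}$, and then \emph{defines} $S=\tilde P\cap\lim(I)$ and $P(\xi)\leftrightarrow\tilde P(\xi+1)$ inside $\Psi$. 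Thus once the pattern is pinned down by primality and the uniqueness theorem, both $P^N=X$ and $S^N=S^*$ follow automatically. Your proposal needs an analogous mechanism; the line ``$S$ is tied to the genuine stationary set seen through the specialising functions'' does not supply one.
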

\begin{proof}
We start by describing the  sentence $\Psi$ (and its language).
 The sentence $\Psi$ describes the following:
\begin{itemize}
\item $(I, \prec)$ is an $\omega_1$-like order,
\item $\langle T^\xi: \xi \in I   \rangle$ is an $I$-sequence of $\omega_1$-like Aronszajn trees,
\item $S \subseteq I$ consists of limit points of $(I, \prec)$,
\item $(\bold{su}, \bold{sp})$ is a simple  pattern, as described in \cite[Subsection 7.1]{Ab-Sh},

\item for each $d \in \bold{su}, \bigoplus_{\xi \in d}T^\xi$ is an $\omega_1$-like Souslin tree,

\item for each  $d \in \bold{sp},~ \bigotimes_{\xi\in d}T^\xi$ is an $\omega_1$-like $S$-special Aronszajn  tree
as witnessed by $f(d)$,

\item $P, \tilde{P}$ are subsets of $I$,
\item $S$ is equal to  $\tilde{P} \cap \lim(I)$, where $\lim(I)$ is the set of limit points of $I$,
\item $\forall \xi \in I \big( P(\xi) \iff \tilde{P}(\xi+1) \big)$,
\item $\forall \xi \in I \big(  \langle  3, 5, 6, \xi     \rangle \in \bold{su} \iff \tilde{P}(\xi)        \big)$.
\end{itemize}
Now given any $X \subseteq \omega_1$ let
 \[
 \tilde{X}=S^* \cup \{\xi+1: \xi \in X      \}
 \]
 and let $(\bold{su}, \bold{sp})$ be a simple $S^*$-pattern with $\tilde{X}=\{ \xi < \omega_1:   \langle  3, 5, 6, \xi     \rangle \in \bold{su}     \}$.  By Lemma \ref{k12}, there exists a sequence $\langle T^\xi: \xi < \omega_1 \rangle$ of $\aleph_1$-Aronszajn trees which has the pattern $S^*$-$(\bold{su}, \bold{sp})$. Then for all $\xi < \omega_1,$
 \[
 \xi \in X \Leftrightarrow \xi+1 \in \tilde{X} \Leftrightarrow  \langle  3, 5, 6, \xi+1     \rangle \in \bold{su} \Leftrightarrow \tilde{P}(\xi+1) \Leftrightarrow P(\xi).
 \]
 Similarly for each limit ordinal $\xi < \omega_1$,
 \[
 \xi \in S^* \Leftrightarrow \xi\in \tilde{X} \Leftrightarrow  \langle  3, 5, 6, \xi     \rangle \in \bold{su} \Leftrightarrow \tilde{P}(\xi) \Leftrightarrow \xi \in S.
 \]
 This takes care of (1).

To prove (2), let $\mathcal{U}=\{\bigoplus_{\xi\in d}T^\xi: d \in \bold{su}   \},$ and let $\MPB=\MPB_{S^*, \mathcal{U}}$ be the forcing notion of Theorem \ref{k17}.
The forcing is $\aleph_2$-c.c.
Let $N$ be a model of $\Psi$. Then $I^N$ is an $\omega_1$-like order and the sequence $\langle  (T^\xi)^N: \xi \in I \rangle$
 has the simple pattern $S^*$-$(\bold{su}, \bold{sp})$. By the uniqueness of simple patterns \footnote{See \cite[Theorem 7.4]{Ab-Sh}. We may note that the theorem is stated for special trees, but its proof works for $S^*$-st-special trees as well.} $I^N$ is isomorphic to $\omega_1$,
 and after such an identification $(\bold{su}^N, \bold{sp}^N)=(\bold{su}, \bold{sp})$.  From this it follows that $P^N=X$ and $S^N=S^*.$

 Clause (3) can be proved as in \cite[Subsection 8.1]{Ab-Sh}, so we skip its proof.
\end{proof}
\section{Proof of main theorem}
\label{g19}
In this section we prove theorem \ref{th1}.
\\
{\bf Proof of Theorem \ref{th1}:}
In $V$, let $S^* \subseteq \omega_1$ be stationary such that
such that $\Diamond(S^*)$ holds.
Let $\Psi$ be the sentence in the  Magidor-Malitz logic with  a one-place predicates $P(x)$ and $S(x)$ given by Theorem \ref{k19}. Since $\CH$ holds in $V$, we can fin a subset $P \subseteq \omega_1$ which encodes in a natural way a well-order $\langle r_\alpha: \alpha < \omega_1      \rangle$ of $\MRB$ of order type $\omega_1.$  Now let $\phi$ be the sentence:
 \begin{center}
 ``there is a model $\bold K$ of $\Psi$ where $r_\alpha$ appears in $P^{\bold K}$ before $r_\beta$ does''.
 \end{center}
 By Lemma \ref{g12}, $\phi$ is a $\Sigma^2_2$-statement.
 By Theorem \ref{k19}, $\GCH$
is consistent with $\phi$. Indeed we can find a model $\bold M$ of $\Psi$ with $P^{\bold M}=P$ and a generic extension  in which $\bold M$ is the unique model of $\Psi.$
In this generic extension the relation $r_\alpha < r_\beta$ defined by
the above formula $\phi$
is $\Sigma^2_2$, and hence $\Delta^2_2$ (since any $\Sigma^2_2$ linear order is $\Delta^2_2$).
This gives the proof of Theorem \ref{th1}.

\end{document}